\newtheorem{theorem}{Theorem}[section]
\newtheorem{lemma}[theorem]{Lemma}
\newtheorem{proposition}[theorem]{Proposition}
\newtheorem{corollary}[theorem]{Corollary}
\theoremstyle{definition}
\newtheorem{definition}[theorem]{Definition}
\newtheorem{question}[theorem]{Question}
\theoremstyle{remark}
\numberwithin{equation}{section}
 \def\T{ \mathbb T}
 \def\R{ \mathbb R}
 \def\H{H^\infty}
 \def\D{{ \mathbb D}}
    \def\Q{{ \mathbb Q}}
 \def\C{{ \mathbb C}}
 \def\Z{{ \mathbb Z}}
 \def\N{{ \mathbb N}}
  \newcommand{\zit}[1]{(\ref{#1})}
  \def\ap {{\rm AP}^+}
 \def\bsr{\operatorname{bsr}}
\def\tsr{\operatorname{tsr}}
 \def\sp {\quad}
 \def\bs{\boldsymbol}
 \def\dis{\displaystyle}
 \def\union{\cup}
 \def\inter{\cap}
 \def\ov{\overline}
 \def\ss{\subseteq}
 \def\emp{\emptyset}
 \def\buildrel#1_#2^#3{\mathrel{\mathop{\kern 0pt#1}\limits_{#2}^{#3}}}
 \def\ssi{\Longleftrightarrow}
\begin{document}

 \title [Almost periodic functions]{The Bass and topological stable ranks for
 algebras  of almost periodic functions\\  on the real line}


 \author{Raymond Mortini}
  \address{
Universit\'{e} de Lorraine\\
 D\'{e}partement de Math\'{e}matiques et  
Institut \'Elie Cartan de Lorraine,  UMR 7502\\
 Ile du Saulcy\\
 F-57045 Metz, France} 
 \email{Raymond.Mortini@univ-lorraine.fr}

 \author{Rudolf Rupp}
\address{ Fakult\"at f\"ur Angewandte Mathematik, Physik  und Allgemeinwissenschaften\\
\small TH-N\"urnberg\\
\small Kesslerplatz 12\\
\small D-90489 N\"urnberg, Germany
}
\email  {Rudolf.Rupp@th-nuernberg.de}

\thanks{We thank Amol Sasane for his  interest in this work and  
 Albrecht B\"ottcher for  the reference  \cite{bks}. We also thank the referee for his suggestions.
}

\subjclass[2010]{Primary 46J10, Secondary 42A75; 30H05}

\begin{abstract}
Let $\Lambda$  be a  sub-semigroup of the reals. 
We show that  the Bass and topological stable ranks of the algebras  
${\rm AP}_\Lambda=\{f\in {\rm AP}: \sigma(f)\ss \Lambda\}$
of almost periodic functions  on the real line and with Bohr spectrum in $\Lambda$
are infinite whenever the algebraic dimension of the $\Q$-vector space 
 generated by $\Lambda$ is infinite.
This extends Su\'arez's  result for ${\rm AP}_\R={\rm AP}$. Also considered are 
general subalgebras of AP. 
\end{abstract}

\maketitle

\section*{Introduction}
 
 Let $C_b(\R,\C)$ denote the set of bounded, continuous functions on $\R$ with values in $\C$
 and  let AP  be the uniform closure in $C_b(\R,\C)$ of  the set of all functions of the form
 $$Q(t):=\sum_{j=1}^N a_j e^{i\lambda_j t},$$
 where $a_j\in \C$, $\lambda_j\in \R$ and $N\in \N^*$. AP is  
 the set of {\it almost periodic functions}.
 We call $Q$ a {\it generalized trigonometric polynomial}.
 Under the usual pointwise algebraic operations, AP is a point separating function algebra
 on $\R$ with the property that $f\in {\rm AP}$ implies that $\ov f \in {\rm AP}$.
 Harald Bohr developed the  basic theory  for this space in a series of papers \cite{boh,boh2}.
 We also refer to the nice books by Corduneanu \cite{cor}  and Besicovich \cite{bes} for an introduction  into this important class of functions.  Modern treatments and applications
 to operator theory can be found for example in \cite{bks}.
 
 In our paper we are interested in a specific algebraic property and its topological counterpart
 for subalgebras of AP:
 namely the Bass and topological stable ranks (see below for the definition).
 In \cite{sua} Daniel Su\'arez showed that these ranks are infinite  for AP.
 This result reflected again the close connection  of AP to the infinite polydisk algebra $A(\D^\infty)$,
 a connection unveiled by Bohr in his  fundamental papers.  Let us mention that
 the stable ranks of $A(\D^\infty)$ were known to be infinite  (see for instance
 \cite{mo}).  Su\'arez's  approach did not allow to calculate the Bass stable rank of 
  the analytic trace of AP,  namely the  algebra 
 $$\mbox{$\ap=\{f\in {\rm AP}:  \widehat f(\lambda)=0 $ for all $\lambda\in \;]-\infty,0[\;\}$}$$
 of those almost periodic functions on $\R$ \footnote{Here $\widehat f(\lambda)$ is the  Fourier-Bohr coefficient associated with $\lambda$;  see below for the Definition.} 
 that admit a bounded holomorphic extension to
 the upper half-plane.   Indeed, he deduced the fact that $\bsr {\rm AP}=\infty$ from 
his Theorem that the topological stable rank of {\rm AP} is infinite  and Rieffel's result 
 that for commutative $C^*$-algebras
 the Bass stable rank coincides with the topological one (see \cite{ri}).
  Since in $\ap$ the only functions
$f$ satisfying  $f\in \ap$ and $\ov f\in \ap$ are the constants, Su\'arez's method  cannot be used 
and adapted to handle the algebra $\ap$.

 Using a different and actually more elementary  method than Su\'arez's,  we extend
 his result to the following natural subalgebras of AP: for  a sub-semigroup $\Lambda$ 
 of $(\R,+)$ let
 $${\rm AP}_\Lambda=\{f\in {\rm AP}: \sigma(f)\ss \Lambda\},$$
 where $\sigma(f)$ denotes the Bohr spectrum of $f\in{\rm AP}$ (see below).
 Our main result  tells us that  the Bass and topological stable ranks of ${\rm AP}_\Lambda$
 are infinite as well, whenever the dimension of the vector space $[\Lambda]$ 
 generated by $\Lambda$ over $\Q$ is infinite. Moreover, if this latter condition is not satisfied,
 then these stable ranks can be finite.    We also consider more general subalgebras of AP.
 
 Finally, we would like to  mention that a first attempt to calculate the stable ranks for
  ${\rm AP}_\Lambda$
 and, in particular, the one for $\ap$, was given by Mikkola and Sasane in \cite{misa}.
 
 \section{The tools for our proof}

  For the reader's convenience, we state here several results, surely known to people
 working with AP functions, that are necessary to understand  our proofs of the new results. 
 The most basic tool will be Kronecker's  approximation theorem. A very elegant proof can be found
 in \cite{klk}. 

  \begin{theorem}[Kronecker]\label{kron2}
  The following  statements are true: \footnote{ Here [C] stands for continuous, and  [D]  for discrete.}
  \begin{enumerate}
  
\item [(C$_N$)]  For $j=1,\dots, N$, let $\lambda_j\in \R$. Suppose that 
$\{\lambda_1,\dots,\lambda_N\}$ is
  linearly independent over $\Q$. Then 
  $$C:=\{\bigl(e^{i \lambda_1 t}, \dots, e^{i\lambda_N t}\bigr): t\in \R\}$$
  is dense in $\T^N$.
  
   \item [(D$_N$)]  For $j=1,\dots, N$, let $\lambda_j\in \R$. Suppose that 
   $\{\lambda_1,\dots,\lambda_N, 2\pi\}$ is
  linearly independent over $\Q$. Then 
  $$D:=\{\bigl(e^{i \lambda_1 n}, \dots, e^{i\lambda_N n}\bigr): n\in \N\}$$
  is dense in $\T^N$.
\end{enumerate}
\end{theorem}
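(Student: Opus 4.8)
The plan is to treat both statements through a single group-theoretic mechanism, since each asserts that the orbit of a point under a one-parameter (resp.\ cyclic) action is dense in the compact abelian group $\T^N$. In both cases I would first argue that the closure $H$ of the orbit is a \emph{closed subgroup} of $\T^N$, and then show $H=\T^N$ by ruling out any proper closed subgroup containing the orbit. The decisive structural input is Pontryagin duality for $\T^N$: every proper closed subgroup $H$ of $\T^N$ is contained in the kernel of some nontrivial character, and the characters of $\T^N$ are precisely the monomials $\chi_k(z_1,\dots,z_N)=z_1^{k_1}\cdots z_N^{k_N}$ indexed by $k=(k_1,\dots,k_N)\in\Z^N$. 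I would also record at the outset that linear independence over $\Q$ is equivalent to the absence of any nontrivial integer relation $\sum_j k_j\lambda_j=0$.

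For (C$_N$): the map $\gamma(t)=(e^{i\lambda_1 t},\dots,e^{i\lambda_N t})$ is a continuous homomorphism $\R\to\T^N$, so $C=\gamma(\R)$ is a subgroup and $H:=\ov C$ is a closed subgroup of $\T^N$. If $H\neq\T^N$, I choose a nonzero $k\in\Z^N$ with $\chi_k\equiv 1$ on $H$, hence on $C$. Evaluating at $\gamma(t)$ gives $e^{i(k_1\lambda_1+\dots+k_N\lambda_N)t}=1$ for every $t\in\R$, which forces $k_1\lambda_1+\dots+k_N\lambda_N=0$. As $k\neq 0$, this is a nontrivial $\Q$-linear relation among $\lambda_1,\dots,\lambda_N$, contradicting the hypothesis. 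Hence $H=\T^N$ and $C$ is dense.

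For (D$_N$): now $D$ is only the forward orbit $\{w^n:n\in\N\}$ of the single point $w=(e^{i\lambda_1},\dots,e^{i\lambda_N})$ under multiplication, so the first task is to see that $\ov D$ is still a subgroup. This follows from a recurrence argument: by compactness of $\T^N$ the sequence $(w^n)$ has a Cauchy subsequence, which produces positive powers of $w$ arbitrarily close to the identity; hence $1\in\ov D$, and every negative power $w^{-m}$ is a limit of positive powers, so $\ov D$ coincides with the closure of the full cyclic group $\{w^n:n\in\Z\}$ and is a closed subgroup $H$. Running the same character argument, a proper $H$ would yield a nonzero $k\in\Z^N$ with $\chi_k(w)=1$, i.e.\ $e^{i(k_1\lambda_1+\dots+k_N\lambda_N)}=1$, so $k_1\lambda_1+\dots+k_N\lambda_N=2\pi m$ for some $m\in\Z$. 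Since $k\neq 0$, the relation $k_1\lambda_1+\dots+k_N\lambda_N-2\pi m=0$ is a nontrivial $\Q$-linear dependence among $\lambda_1,\dots,\lambda_N,2\pi$, again contradicting the hypothesis. Thus $H=\T^N$ and $D$ is dense.

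The main obstacle is supplying the two group-theoretic facts cleanly: the duality statement that a proper closed subgroup of $\T^N$ lies inside a nontrivial character kernel, and, for the discrete case, the recurrence argument promoting the forward semigroup orbit to a genuine subgroup. Everything else is the routine bookkeeping of turning a trivial character into a vanishing integer combination of the frequencies, with $2\pi$ entering only in (D$_N$) because the action is now by integer powers. If one prefers to avoid Pontryagin duality, the same conclusions can be reached analytically via Weyl's criterion, approximating the indicator of a box by trigonometric polynomials (Fej\'er together with Stone--Weierstrass) and checking that every nontrivial exponential sum averages to zero by the independence hypothesis; but the character approach is the most economical.
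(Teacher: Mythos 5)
Your argument is correct, but note that the paper does not prove this theorem at all: it is quoted as a classical tool, with the proof delegated to the cited note of Kueh in the Monthly, whose argument is an elementary analytic induction on the quantity $\sup_t\bigl|1+\sum_j e^{i(\lambda_j t+\theta_j)}\bigr|$ rather than anything group-theoretic. Your route via Pontryagin duality is the other standard proof and it is sound: in (C$_N$) the orbit is the image of a homomorphism, so its closure is a closed subgroup, and a proper closed subgroup of $\T^N$ is annihilated by a nontrivial character $z\mapsto z_1^{k_1}\cdots z_N^{k_N}$, which converts density into the absence of a nontrivial integer relation $\sum_j k_j\lambda_j=0$; in (D$_N$) your compactness/recurrence step correctly upgrades the forward orbit $\{w^n:n\in\N\}$ to the full cyclic group inside the closure (using translation invariance of the metric, $d(w^{n_{k+1}},w^{n_k})=d(w^{n_{k+1}-n_k},1)$), after which the same character computation produces the relation $\sum_j k_j\lambda_j-2\pi m=0$, nontrivial because $k\neq 0$, against the independence of $\{\lambda_1,\dots,\lambda_N,2\pi\}$. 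The two facts you flag as the real content --- that a proper closed subgroup of a compact abelian group lies in the kernel of a nontrivial character, and that the characters of $\T^N$ are exactly the monomials --- are standard (the first follows by taking a nontrivial character of the nontrivial compact quotient $\T^N/H$ and pulling it back), so nothing is missing; what the duality approach buys is uniformity and brevity, while Kueh's analytic proof buys self-containedness at the level of elementary calculus, which is presumably why the authors chose to cite it.
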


Let us recall the definitions of the fundamental  notions in connection with almost
periodic functions (see for instance \cite{cor}).
 
   \begin{definition}\label{fbco}
   Let $f\in{\rm AP}$. If $\lambda\in\R$, the associated {\it Fourier-Bohr coefficient}
   $\widehat f(\lambda)$ is defined as
   $$ \widehat f(\lambda)=\lim_{|I|\to\infty} \frac{1}{|I|}\int_I f(t)e^{-i\lambda t}\, dt,$$
   where $I$ runs through  the set of all compact intervals in $\R$.
   \end{definition}
   
   \begin{proposition}\label{fbcs}
   If $f\in{\rm AP}$, then $\widehat f(\lambda)$ exists for every $\lambda\in \R$ and
   $\widehat f(\lambda)\not=0$ for at most a countable number of $\lambda$.
\end{proposition}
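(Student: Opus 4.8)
The plan is to reduce both assertions to the existence of the \emph{mean value}
$$M(g):=\lim_{|I|\to\infty}\frac{1}{|I|}\int_I g(t)\,dt$$
for an arbitrary $g\in{\rm AP}$, since $f(\cdot)e^{-i\lambda\cdot}\in{\rm AP}$ whenever $f\in{\rm AP}$, so that $\widehat f(\lambda)=M\bigl(f(\cdot)e^{-i\lambda\cdot}\bigr)$. First I would settle the case of a generalized trigonometric polynomial $Q(t)=\sum_{j=1}^N a_j e^{i\mu_j t}$. Writing $A_I(Q)=\frac{1}{|I|}\int_I Q$, each summand with $\mu_j\neq 0$ contributes at most $\frac{2|a_j|}{|I|\,|\mu_j|}$ in modulus, because $\bigl|\int_I e^{i\mu_j t}\,dt\bigr|\le 2/|\mu_j|$ for any interval $I$; hence $A_I(Q)$ tends to the constant term $a_0$ (the coefficient of the zero frequency, taken as $0$ if no $\mu_j$ vanishes), \emph{uniformly in the position} of $I$. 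In particular $M(Q)$ exists, and the nonzero coefficients $\widehat Q(\mu)$ occur only at the finitely many frequencies $\mu_j$.

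For a general $f\in{\rm AP}$ I would pass from $Q$ to $f$ by uniform approximation. Choosing a trigonometric polynomial $Q$ with $\|f-Q\|_\infty<\e$, the elementary bound $|A_I(f)-A_I(Q)|\le\|f-Q\|_\infty$ holds for every interval $I$. Combining this with the convergence of $A_I(Q)$ from the first step, a standard $\e/3$-type estimate shows that $\{A_I(f)\}$ satisfies a Cauchy criterion as $|I|\to\infty$, so $M(f)$ exists; applied to $f(\cdot)e^{-i\lambda\cdot}$ this yields the existence of $\widehat f(\lambda)$ for every $\lambda\in\R$, together with the useful bound $|\widehat f(\lambda)-\widehat Q(\lambda)|\le\|f-Q\|_\infty$.

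For the countability statement I would establish Bessel's inequality. Since AP is an algebra closed under conjugation, $|f|^2=f\ov f\in{\rm AP}$, so $M(|f|^2)$ exists by the above. Given finitely many distinct frequencies $\lambda_1,\dots,\lambda_n$, set $c_k=\widehat f(\lambda_k)$ and $p(t)=\sum_{k=1}^n c_k e^{i\lambda_k t}$. Expanding $0\le M(|f-p|^2)$ and using $M\bigl(e^{i(\lambda_k-\lambda_l)t}\bigr)=\delta_{kl}$ (again from the first step, as the $\lambda_k$ are distinct) collapses all cross terms and yields $\sum_{k=1}^n|c_k|^2\le M(|f|^2)$. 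Consequently, for each $m\in\N$ the set $\{\lambda:|\widehat f(\lambda)|>1/m\}$ is finite, and $\{\lambda:\widehat f(\lambda)\neq 0\}$ is the countable union of these sets.

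The main obstacle is the first step: proving that the averages $A_I(g)$ converge \emph{uniformly in the location} of $I$, not merely along one fixed family of expanding intervals. This uniformity is precisely where almost periodicity enters, and it is what legitimizes the approximation argument of the second step; once it is in hand, the remaining computations are routine.
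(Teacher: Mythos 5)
Your proof is correct. The paper itself gives no argument for this proposition --- it is quoted as a classical fact from the literature (Corduneanu, Besicovitch) --- and what you have written is precisely the standard proof found there: the explicit bound $\bigl|\int_I e^{i\mu t}\,dt\bigr|\le 2/|\mu|$ gives the mean value of a generalized trigonometric polynomial uniformly in the location of $I$, the estimate $|A_I(f)-A_I(Q)|\le\|f-Q\|_\infty$ transfers this to all of ${\rm AP}$ (note that uniform approximability by trigonometric polynomials is the \emph{definition} of ${\rm AP}$ used here, so no deep approximation theorem is needed), and Bessel's inequality $\sum_k|\widehat f(\lambda_k)|^2\le M(|f|^2)$ yields countability of the spectrum. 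The one point you flag as the ``main obstacle'' --- uniformity in the position of $I$ --- is in fact already disposed of by your first step, since the bound $2|a_j|/(|I|\,|\mu_j|)$ depends only on $|I|$; so there is no remaining gap.
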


   \begin{definition}
   If  $f\in{\rm AP}$, then the {\it Bohr-spectrum}, $\sigma(f)$, of $f$ is the set  of all $\lambda\in\R$
  for which the  associated   Fourier-Bohr coefficient $\widehat f(\lambda)$ is not zero. If $\sigma(f)=\{\lambda_n: n\in I\}$, $I\ss\N$,  then the
    {\it Fourier-Bohr series} associated with $f$ is the formal series
    $$f\sim \sum_{n\in I} \widehat f(\lambda_n) e^{i\lambda_n t}.$$
\end{definition}

The main tool will be the following approximation theorem (see \cite{cor} for a classroom proof and
 \cite{bks} for a shorter, but more advanced   proof).

  \begin{theorem}\label{mainap} \hfill
 \begin{enumerate}
  \item [(1)] The Fourier-Bohr series uniquely determines $f$ whenever $f\in{\rm AP}$.
  \item [(2)] Let $f$ be an almost periodic function on $\R$ with Bohr spectrum $\sigma(f)$.
 Then  there exists a sequence $(q_n)$ of generalized trigonometric polynomials with
 $\sigma(q_n)\ss \sigma(f)$ converging uniformly to $f$.
 \end{enumerate}
\end{theorem}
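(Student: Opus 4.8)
The plan is to establish both parts simultaneously through the classical Bochner--Fej\'er summation method, which has the virtue of producing approximating polynomials whose frequencies are \emph{automatically} a subset of $\sigma(f)$. The starting point is the observation that the mean value $M_s\{g(s)\}:=\lim_{|I|\to\infty}\frac1{|I|}\int_I g$ exists for every $g\in{\rm AP}$ and that $M_s\{f(s)e^{-i\mu s}\}=\widehat f(\mu)$. Consequently, if $K(s)=\sum_{\mu}c_\mu e^{i\mu s}$ is any \emph{finite} generalized trigonometric polynomial, then the ``AP-convolution''
\[
\sigma_K(f)(t):=M_s\{f(s)\,K(t-s)\}=\sum_{\mu}c_\mu\,\widehat f(\mu)\,e^{i\mu t}
\]
is again a generalized trigonometric polynomial, and only those frequencies $\mu$ with $\widehat f(\mu)\neq0$ survive. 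Thus $\sigma(\sigma_K(f))\ss\sigma(f)$ for \emph{every} such kernel $K$, which is exactly the spectral constraint demanded in part~(2). (Note that a bare appeal to Stone--Weierstrass would not respect this constraint.)

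Next I would construct a suitable family of kernels $K_m$. By Proposition \ref{fbcs} the spectrum $\sigma(f)$ is at most countable, so I can choose a finite or countable sequence $\beta_1,\beta_2,\dots$ of reals, linearly independent over $\Q$, such that every $\lambda\in\sigma(f)$ is a finite rational combination of the $\beta_j$. Clearing denominators through level $m$, the Bochner--Fej\'er kernel is taken to be a product of one-dimensional Fej\'er kernels,
\[
K_m(s)=\prod_{j=1}^{m}\Bigl(\sum_{|k|\le n_j}\bigl(1-\tfrac{|k|}{n_j+1}\bigr)\,e^{i k \beta_j s/N_j}\Bigr),
\]
with the orders $n_j$ and the integers $N_j$ chosen appropriately. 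The elementary facts to record are that $K_m\ge0$ (a product of nonnegative Fej\'er kernels), that $M_s\{K_m(s)\}=1$, and that upon expanding the product the resulting coefficients $c_\mu$ lie in $[0,1]$ and tend to $1$ as $m\to\infty$ for each fixed frequency $\mu$.

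The heart of the argument — and the step I expect to be the main obstacle — is the proof that $\sigma_{K_m}(f)\to f$ \emph{uniformly}. Writing
\[
\sigma_{K_m}(f)(t)-f(t)=M_s\{K_m(s)\,[\,f(t-s)-f(t)\,]\},
\]
one runs the usual approximate-identity estimate, splitting the mean into the region where $-s$ is an $\varepsilon$-almost period of $f$ (where the bracket is uniformly small in $t$ by the very definition of almost periodicity) and its complement. The delicate point, absent in the purely periodic case, is that the mass of $K_m$ concentrates precisely on the \emph{common} near-periods of $\beta_1,\dots,\beta_m$, and by the rational-dependence construction these coincide with the near-periods of $f$; here Kronecker's theorem (Theorem \ref{kron2}) governs the relative density of these sets. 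Controlling the complementary contribution by $M_s\{K_m(s)\}=1$ together with the positivity of the kernel then yields uniform convergence, which proves part~(2). Part~(1) follows immediately: since $\sigma_{K_m}(f)$ depends on $f$ only through its Fourier--Bohr coefficients, two functions in ${\rm AP}$ with identical Fourier--Bohr series have identical Bochner--Fej\'er means and hence, by part~(2), the same uniform limit, so they coincide.
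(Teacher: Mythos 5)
The paper itself offers no proof of Theorem \ref{mainap}: it defers to Corduneanu \cite{cor} and to \cite{bks}, and the ``classroom proof'' it points to is precisely the Bochner--Fej\'er summation you are reconstructing. Your preparatory steps are correct and well chosen: the identity $\sigma_K(f)(t)=\sum_\mu c_\mu\widehat f(\mu)e^{i\mu t}$ does force $\sigma(\sigma_K(f))\ss\sigma(f)$ for every finite kernel, which is the reason this method (rather than Stone--Weierstrass) is needed for part (2), and your deduction of part (1) from part (2) is sound.

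The gap is in the one step you yourself flag as the main obstacle, and the repair you sketch is not the one that works. In the splitting of $M_s\{K_m(s)[f(t-s)-f(t)]\}$ over the set $E_\e$ of $\e$-almost periods of $f$ and its complement, you would need $M_s\{K_m\,\chi_{\R\setminus E_\e}\}\to 0$ for fixed $\e$. There is no reason for this: $E_\e$ is only relatively dense, its complement generically has positive upper density, and the kernel built from the first $m$ basis frequencies concentrates on the simultaneous near-periods of the finitely many exponentials $e^{i\beta_j s/N_j}$, $j\le m$, which need not be $\e$-almost periods of $f$ for any fixed $m$ (the spectrum of $f$ involves the whole basis and arbitrarily large denominators, so ``these coincide with the near-periods of $f$'' is false at each finite stage). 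Bounding the complementary term by $M_s\{K_m\}=1$ only yields $2\|f\|_\infty$, which is useless. The classical argument closes the gap differently: one first proves convergence in quadratic mean, $M\{|f-\sigma_{K_m}(f)|^2\}\to 0$, which after the computation $M\{|f-\sigma_{K_m}(f)|^2\}=M\{|f|^2\}-\sum_\mu c_\mu(2-c_\mu)|\widehat f(\mu)|^2$ is equivalent to Parseval's equation (itself requiring a separate proof beyond Bessel's inequality); one then upgrades to uniform convergence by observing that each $\sigma_{K_m}$ is a positive, translation-commuting contraction, so the differences $f-\sigma_{K_m}(f)$ form a uniformly bounded, equi-almost-periodic family, and for such a family quadratic-mean convergence to zero forces uniform convergence (a member that is somewhere of modulus $\ge\delta$ stays $\ge\delta/2$ on a relatively dense set of intervals of fixed length, so its quadratic mean is bounded below). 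Without these two ingredients --- Parseval and the equi-almost-periodicity upgrade --- your proof does not close.
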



\section{Some algebraic properties of  ${\rm AP}$ and ${\rm AP}_\Lambda$} 

We first present a simple proof of the well-known ``corona theorem''  for the algebra
of almost periodic functions.   

 \begin{definition}
Let $A$ be  a commutative unital algebra (real or complex)  (or just a commutative unital ring)
 with identity element denoted by 1.
 An $n$-tuple $(f_1,\dots,f_n)\in A^n$ is said to be {\it invertible} (or {\it unimodular}), 
 if there exists
 $(x_1,\dots,x_n)\in A^n$ such that the B\'ezout equation $\sum_{j=1}^n x_jf_j=1$
 is satisfied.
   The set of all invertible $n$-tuples is denoted by $U_n(A)$. Note that $U_1(A)=A^{-1}$,
   the latter being the set of invertible elements in $A$.
\end{definition}

\begin{theorem}\label{unap}  The following assertions hold:

\begin{enumerate}

\item [i)]  An element $F\in {\rm AP}$ is invertible if and only if  $\delta:=\inf_\R |F|>0$.

\item [ii)]  $U_n({\rm AP})=\{(F_1,\dots,F_n)\in {\rm AP}^n: \inf_\R \sum_{j=1}^n |F_j|>0\}.$

\end{enumerate}
\end{theorem}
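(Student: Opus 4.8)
The plan is to reduce both parts to a single elementary fact: if $g\in{\rm AP}$ is real-valued and bounded below by a positive constant, then $1/g\in{\rm AP}$. This fact, combined with the self-adjointness of AP (the property that $f\in{\rm AP}$ implies $\ov f\in{\rm AP}$, recorded in the Introduction) and the explicit B\'ezout solution familiar from corona problems, will yield everything. Notice that AP contains the constants (take $\lambda=0$) and is uniformly closed in $C_b(\R,\C)$ by definition, so it is a commutative unital $C^*$-subalgebra, and these are exactly the features the argument exploits.

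First I would establish the key fact. Suppose $g\in{\rm AP}$ is real-valued with $c:=\inf_\R g>0$, and set $M:=\sup_\R g<\infty$. The function $x\mapsto 1/x$ is continuous on the compact interval $[c,M]$, so by the Weierstrass approximation theorem there are real polynomials $p_k$ converging to $1/x$ uniformly on $[c,M]$. Since $g$ takes its values in $[c,M]$, the compositions $p_k\circ g$ converge uniformly on $\R$ to $1/g$. Each $p_k\circ g$ lies in AP, because AP is an algebra containing the constants and $g$; as AP is uniformly closed, we conclude $1/g\in{\rm AP}$.

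Part (i) is then immediate. For the nontrivial implication, assume $\delta:=\inf_\R|F|>0$ and put $g:=|F|^2=F\ov F$. By self-adjointness $g\in{\rm AP}$, and $g\ge\delta^2>0$, so $1/g\in{\rm AP}$ by the key fact; hence $1/F=\ov F/g\in{\rm AP}$. The converse is the trivial observation that an inverse $G$ of $F$ forces $|F|\ge 1/\|G\|_\infty>0$, so $\delta>0$.

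Part (ii) runs along the same lines using the classical corona solution. The inclusion $\subseteq$ follows by bounding $1=\bigl|\sum_{j=1}^n x_jF_j\bigr|\le\bigl(\max_j\|x_j\|_\infty\bigr)\sum_{j=1}^n|F_j|$ from below. For the reverse inclusion, assume $\delta:=\inf_\R\sum_{j=1}^n|F_j|>0$ and set $g:=\sum_{j=1}^n|F_j|^2\in{\rm AP}$. The Cauchy--Schwarz inequality $\bigl(\sum_{j=1}^n|F_j|\bigr)^2\le n\sum_{j=1}^n|F_j|^2$ gives $g\ge\delta^2/n>0$, so $1/g\in{\rm AP}$ by the key fact; then $x_j:=\ov{F_j}/g\in{\rm AP}$ and $\sum_{j=1}^n x_jF_j=1$. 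The only real point, such as it is, is recognizing that the corona solution $x_j=\ov{F_j}/\sum_k|F_k|^2$ stays inside AP, which is precisely what the key fact guarantees; once this is in place the argument is routine.
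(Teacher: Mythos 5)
Your proof is correct and follows essentially the same route as the paper: Weierstrass approximation plus the self-adjointness and uniform closedness of AP to invert, and the explicit corona solution $x_j=\ov{F_j}/\sum_k|F_k|^2$ for part (ii). The only (harmless) variation is that you invert the real-valued function $|F|^2$ via one-variable Weierstrass on $[c,M]$, whereas the paper approximates $1/z$ directly by polynomials in $z,\ov z$ on the annulus $\delta\le|z|\le M$.
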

\begin{proof}
i)  Let $F\in U_1({\rm AP})$. Then $F$ is invertible in $C_b(\R,\C)$ and so 
 $F$ is bounded away from zero. Conversely, let $F\in {\rm AP}$ satisfy $|F|\geq \delta>0$ on $\R$.
Since $F$  is bounded, there exists $M>0$ such that $\delta\leq |F|\leq M$ on $\R$.
 By Weierstrass' approximation theorem, let $(p_n)$ be  a sequence of polynomials
 in two variables such that
 $$\sup_{\delta\leq |z|\leq M} \left|p_n(z,\ov z)- \frac{1}{z}\right|\to 0.$$
 Then
 $$\sup_{x\in \R} \left| p_n\Bigl(F(x),\ov {F(x)}\Bigr)-\frac{1}{F(x)}\right|\to 0.$$
 Since $F\in {\rm AP}$ implies $\ov F\in {\rm AP}$, we deduce that $p_n(F,\ov F)$
 belongs to the algebra ${\rm AP}$, too. Because AP is uniformly closed we obtain
 $1/F\in {\rm AP}$.
 \\ 
  
 ii)  Let $\bs F:=(F_1,\dots, F_n)\in U_n({\rm AP)}$.  Then $\bs F$ is  an invertible $n$-tuple
 in $C_b(\R,\C)$. Hence, there is $\delta>0$ such that
 $ \sum_{j=1}^n |F_j|\geq \delta>0$. Conversely, if this latter condition is satisfied,
 then the function
 $$Q_j:=\frac{\ov F_j}{\sum_{k=1}^n |F_k|^2}$$
 is in $C_b(\R,\C)$. Since $F\in {\rm AP}$ implies $\ov F\in {\rm AP}$,  we see that 
 $|F|^2\in{\rm AP}$. Hence   $\sum_{k=1}^n|F_k|^2\in {\rm AP}$.
 By i) its inverse belongs to {\rm AP}, as well. Thus $Q_j\in {\rm AP}$.
 Since $\sum_{j=1}^n Q_jF_j=1$, we have shown that $\bs F\in U_n({\rm AP})$.
 \end{proof}

 \begin{lemma}\label{cnap+}
 Let $\Lambda$ be a sub-semigroup  of $(\R,+)$; that is $\Lambda$ has the following properties:
\begin{enumerate}
\item [(i)]  $0 \in \Lambda$;
\item [(ii)] $\lambda,\mu\in \Lambda$ implies $\lambda+\mu\in \Lambda$;
\end{enumerate}
Furthermore, let
$${\rm AP}_\Lambda:=\{f\in {\rm AP}: \sigma(f)\ss \Lambda\}.$$
Then 
\begin{enumerate}
 \item [(1)] ${\rm AP}_\Lambda$ is a uniformly closed subalgebra of {\rm AP}.
 \item [(2)]  If  $\emp\not=\Lambda_0=\{\lambda_1,\dots,\lambda_N\}\ss \Lambda\inter\R^+$, then  
the evaluation map 
$$\Phi_{\Lambda_0}: \begin{cases} A(\D^N) &\to {\rm AP}_\Lambda\\
                                         \hspace{0,5cm}f&\mapsto \Phi_{\Lambda_0}(f),
                                         \end{cases}
                                         $$
where $\Phi_{\Lambda_0} (f)(t) := f(e^{i\lambda_1t}, \dots,    e^{i\lambda_Nt})$ is an  algebra
 homomorphism         and $||\Phi_{\Lambda_0}(f)||_\infty\leq ||f||_\infty$.
 \item [(3)] If the positive  numbers 
 $\lambda_1,\dots, \lambda_N$      are linearly independent over $\Q$,
 then $\Phi_{\Lambda_0}$ is injective and $||\Phi_{\Lambda_0}(f)||_\infty   =||f||_\infty.$                   
\end{enumerate}
\end{lemma}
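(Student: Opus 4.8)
The plan is to build everything on one elementary observation: by Definition \ref{fbco}, for each fixed $\lambda$ the map $f\mapsto\widehat f(\lambda)$ is linear and satisfies $|\widehat f(\lambda)|\le\|f\|_\infty$, hence is continuous for the uniform norm. Consequently, if $f_n\to f$ uniformly with $\sigma(f_n)\ss\Lambda$ for every $n$, then for each $\lambda\notin\Lambda$ one has $\widehat f(\lambda)=\lim_n\widehat{f_n}(\lambda)=0$, so $\sigma(f)\ss\Lambda$. This already gives that ${\rm AP}_\Lambda$ is uniformly closed, since AP itself is closed and the limit then lies in AP. For the linear structure, linearity of $f\mapsto\widehat f(\lambda)$ yields $\sigma(f+g)\ss\sigma(f)\union\sigma(g)$ and $\sigma(cf)\ss\sigma(f)$, so ${\rm AP}_\Lambda$ is a subspace. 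The only place the semigroup hypothesis enters is closure under products: given $f,g\in{\rm AP}_\Lambda$, I would use Theorem \ref{mainap}(2) to pick generalized trigonometric polynomials $q_n\to f$ and $r_n\to g$ with $\sigma(q_n)\ss\sigma(f)\ss\Lambda$ and $\sigma(r_n)\ss\sigma(g)\ss\Lambda$; then $q_nr_n$ is again a generalized trigonometric polynomial whose frequencies are sums $\lambda+\mu$ with $\lambda,\mu\in\Lambda$, hence in $\Lambda$ by property (ii), and $q_nr_n\to fg$ uniformly. The closure observation then forces $\sigma(fg)\ss\Lambda$.

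For (2), I would first verify the claim on monomials. Writing $z^\alpha=z_1^{\alpha_1}\cdots z_N^{\alpha_N}$ with $\alpha_j\in\N$, we get $\Phi_{\Lambda_0}(z^\alpha)(t)=e^{i(\alpha_1\lambda_1+\cdots+\alpha_N\lambda_N)t}$, and the frequency $\sum_j\alpha_j\lambda_j$ lies in $\Lambda$ because each $\lambda_j\in\Lambda\inter\R^+$ and $\Lambda$ is closed under addition (with $0\in\Lambda$ covering the zero exponent). By linearity $\Phi_{\Lambda_0}$ sends every polynomial to a generalized trigonometric polynomial in ${\rm AP}_\Lambda$, and it is visibly multiplicative there since $\Phi_{\Lambda_0}(z^\alpha z^\beta)=\Phi_{\Lambda_0}(z^\alpha)\Phi_{\Lambda_0}(z^\beta)$. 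The norm estimate is immediate from the maximum principle on the polydisk: every point $(e^{i\lambda_1t},\dots,e^{i\lambda_Nt})$ lies on the distinguished boundary $\T^N$, so $\|\Phi_{\Lambda_0}(f)\|_\infty\le\sup_{\T^N}|f|=\|f\|_\infty$. Finally I would pass from polynomials to arbitrary $f\in A(\D^N)$ by density: choosing polynomials $p_n\to f$ uniformly on $\ov\D^N$, the estimate gives $\Phi_{\Lambda_0}(p_n)\to\Phi_{\Lambda_0}(f)$ uniformly, so $\Phi_{\Lambda_0}(f)\in{\rm AP}_\Lambda$ by part (1), and continuity of multiplication upgrades the homomorphism property from polynomials to all of $A(\D^N)$.

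For (3), the linear independence of $\lambda_1,\dots,\lambda_N$ over $\Q$ lets me apply Kronecker's theorem (C$_N$) of Theorem \ref{kron2}: the curve $C=\{(e^{i\lambda_1t},\dots,e^{i\lambda_Nt}):t\in\R\}$ is dense in $\T^N$. Since $f$ is continuous on $\ov\D^N$ and the maximum of $|f|$ over the polydisk is attained on $\T^N$, density of $C$ gives $\|\Phi_{\Lambda_0}(f)\|_\infty=\sup_{t\in\R}|f(e^{i\lambda_1t},\dots,e^{i\lambda_Nt})|=\sup_C|f|=\sup_{\T^N}|f|=\|f\|_\infty$. Thus $\Phi_{\Lambda_0}$ is an isometry, and in particular injective.

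I expect the main obstacle to be the product step in part (1): one must be certain that approximating $f$ and $g$ \emph{separately} by spectrally constrained polynomials and then multiplying really does converge to $fg$ with all frequencies confined to $\Lambda$, and that the closure observation can then be applied to conclude $\sigma(fg)\ss\Lambda$ rather than merely that $fg$ is a uniform limit of functions supported on $\Lambda$. Everything else reduces to linearity of the Fourier-Bohr coefficients, the maximum principle on $\T^N$, or a direct appeal to Kronecker's theorem.
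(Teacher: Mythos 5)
Your proposal is correct and follows essentially the same route as the paper: continuity of the Fourier--Bohr coefficients under uniform convergence for closedness, approximation by spectrally constrained trigonometric polynomials (Theorem \ref{mainap}(2)) together with the semigroup property for closure under products, density of polynomials for (2), and Kronecker's theorem plus the distinguished maximum principle for the isometry and injectivity in (3). The only (harmless) deviation is that you handle sums directly via linearity of $f\mapsto\widehat f(\lambda)$, where the paper also routes sums through polynomial approximation.
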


\begin{proof}
(1) We first show that ${\rm AP}_\Lambda$ is uniformly closed. In fact, if $(f_n)$ is a sequence
in ${\rm AP}_\Lambda$ coverging uniformly to some $f\in C_b(\R,\C)$, then $f\in {\rm AP}$
(because ${\rm AP}_\Lambda\ss {\rm AP}$ and AP is closed). Hence, by a standard reasoning, $\widehat f_n(\lambda)\to \widehat f(\lambda)$ for every $\lambda\in \R$. Consequently, 
 $\widehat f(\lambda)=0 $ for every  $\lambda\in \R\setminus \Lambda$; that is 
 $f\in {\rm AP}_\Lambda$. 
 
 Now we show that ${\rm AP}_\Lambda$ is an algebra.
 For $j=1,2$,  let $f_j\in {\rm AP}_\Lambda$. By Theorem \ref{mainap}(2), there is a sequence
 of trigonometric polynomials $p^{(j)}_n$ with $\sigma(p^{(j)}_n)\ss \sigma(f_j)\ss \Lambda$ 
 converging uniformly to $f_j$.  Now 
 $$\sigma(p^{(1)}_n+p^{(2)}_n)\ss \sigma(p^{(1)}_n) \union \sigma(p^{(2)}_n)\ss \Lambda.$$
 Since $\bigl(p^{(1)}_n +p^{(2)}_n\bigr)$ converges uniformly to $f_1+f_2$,
 we obtain  that $\sigma(f_1+f_2)\ss \Lambda$. Hence 
 $f_1+f_2\in {\rm AP}_\Lambda$. 
 
 Moreover,
 $$\sigma(p^{(1)}_n \cdot p^{(2)}_n)\ss [[\sigma(p^{(1)}) \union \sigma(p^{(1)})]]\ss \Lambda,$$
 where  $[[X]]$ denotes the  set $\{a+b:a,b\in X\}$.
 Hence, by a similar reasoning as above,
 $f_1\cdot f_2\in {\rm AP}_\Lambda$.
 
 Since $\alpha f\in {\rm AP}_\Lambda$ whenever $f\in {\rm AP}_\Lambda$ and $\alpha\in \C$,
 we conclude that ${\rm AP}_\Lambda$ is an algebra over $\C$.
 
 (2) This is  a consequence of the fact that every $f\in  A(\D^N)$ is the uniform limit of 
 a sequence of polynomials in $\C[z_1,\dots,z_N]$.
 
 (3)  
Using the hypothesis that $\{\lambda_1,\dots,\lambda_N\}$ is linearly independent over $\Q$,
 we obtain from Kronecker's
approximation Theorem \ref{kron2} that 
$$E:=\{(e^{i\lambda_1t},\dots, e^{i\lambda_Nt}): t\in \R\}$$
 is dense in $\T^N$.
Now $\sup_S |f|=\sup_{\T^N}|f|$ for every dense set $S$ in $\T^N$. Thus, for $f\in  A(\D^N)$,
and $\bs w=(w_1,\dots, w_N)$,
$$\sup_{t\in \R}|\Phi_\Lambda (f)(t)|=\sup_{t\in \R} |f(e^{i\lambda_1 t},\dots,  e^{i\lambda_N t})|
=\sup_{\bs w\in E} |f(w_1,\dots,w_N)|=\max_{\T^N}|f|.$$
By the distinguished maximum principle, we have   for every $f\in A(\D^N)$  that
 $$\max_{\T^N}|f|= ||f||_\infty.$$
Hence $||\Phi_\Lambda(f)||_\infty=||f||_\infty$. The injectivity of the linear map $\Phi_\Lambda$ follows.
 \end{proof}

 Here is the analogue  of Theorem \ref{unap} for $AP_{\Lambda}$ whenever
 $\Lambda$ is a subgroup of $(\R,+)$. \footnote{ The result itself is not new and we present it only
 since we could not  pin down our proof in the literature.}

 \begin{proposition} 
 Let $\Lambda\ss\R$ be an additive group. Then ${\rm AP}_\Lambda$  is a $C^*$-subalgebra of
 $C_b(\R,\C)$ and the following assertions hold:

\begin{enumerate}

\item [i)]  An element $F\in {\rm AP}_\Lambda$ is invertible if and only if  $\delta:=\inf_\R |F|>0$.
\item [ii)]  $U_n({\rm AP_\Lambda})=\{(F_1,\dots,F_n)\in {({\rm AP}_\Lambda)}^n: \inf_\R \sum_{j=1}^n |F_j|>0\}.$
\end{enumerate}
\end{proposition}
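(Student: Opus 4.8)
The plan is to exploit the single structural feature that distinguishes the group case from the semigroup case, namely that $-\Lambda=\Lambda$. This forces ${\rm AP}_\Lambda$ to be closed under complex conjugation, and once self-adjointness is available the entire $C_b(\R,\C)$-type invertibility argument from Theorem \ref{unap} transfers almost verbatim. I would therefore split the work into two parts: (a) establishing the $C^*$-subalgebra claim, and (b) deducing i) and ii) by repeating the proof of Theorem \ref{unap} with one key substitution.

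For (a), note first that a group is in particular a sub-semigroup, so Lemma \ref{cnap+}(1) already gives that ${\rm AP}_\Lambda$ is a uniformly closed subalgebra of $C_b(\R,\C)$; the $C^*$-identity $\|f^*f\|_\infty=\|f\|_\infty^2$ is automatic for the sup-norm with $f^*=\ov f$. The only point requiring verification is that $\ov f\in {\rm AP}_\Lambda$ whenever $f\in {\rm AP}_\Lambda$. For this I would compute the Fourier-Bohr coefficients of $\ov f$ directly from Definition \ref{fbco}:
$$\widehat{\ov f}(\lambda)=\lim_{|I|\to\infty}\frac{1}{|I|}\int_I \ov{f(t)}\,e^{-i\lambda t}\,dt=\ov{\lim_{|I|\to\infty}\frac{1}{|I|}\int_I f(t)\,e^{i\lambda t}\,dt}=\ov{\widehat f(-\lambda)}.$$
Hence $\widehat{\ov f}(\lambda)\neq 0$ exactly when $-\lambda\in\sigma(f)$, so that $\sigma(\ov f)=-\sigma(f)$. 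Since $\sigma(f)\ss\Lambda$ and $\Lambda$ is a group, $\sigma(\ov f)=-\sigma(f)\ss -\Lambda=\Lambda$, whence $\ov f\in {\rm AP}_\Lambda$. This is the heart of the proposition and the only step where the group hypothesis, as opposed to mere semigroup closure, is actually used.

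For (b), part i) follows as in Theorem \ref{unap}. The forward direction is immediate since invertibility in ${\rm AP}_\Lambda\ss C_b(\R,\C)$ forces $F$ to be bounded away from zero. For the converse, with $\delta\le|F|\le M$, I would take Weierstrass polynomials $p_n(z,\ov z)\to 1/z$ on the annulus $\{\delta\le|z|\le M\}$, so that $p_n(F,\ov F)\to 1/F$ uniformly. Because ${\rm AP}_\Lambda$ is an algebra containing both $F$ and $\ov F$, each $p_n(F,\ov F)$ lies in ${\rm AP}_\Lambda$, and uniform closedness yields $1/F\in {\rm AP}_\Lambda$. Part ii) proceeds identically to Theorem \ref{unap}(ii): the functions $Q_j=\ov F_j/\sum_{k=1}^n|F_k|^2$ lie in ${\rm AP}_\Lambda$ because $\ov F_j\in {\rm AP}_\Lambda$, each $|F_k|^2=F_k\ov F_k\in {\rm AP}_\Lambda$, and the denominator, being bounded below, has its inverse in ${\rm AP}_\Lambda$ by part i); the identity $\sum_{j=1}^n Q_jF_j=1$ then exhibits the tuple as unimodular.

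The only genuine obstacle is the conjugation-closure step, and even that reduces to the elementary coefficient computation above; the group hypothesis is precisely what is needed to send $\sigma(\ov f)=-\sigma(f)$ back inside $\Lambda$. All remaining steps are a faithful replay of the proof of Theorem \ref{unap}, with the assertion ``$\ov F\in {\rm AP}$'' upgraded throughout to ``$\ov F\in {\rm AP}_\Lambda$''.
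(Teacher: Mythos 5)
Your proposal is correct and follows essentially the same route as the paper: the key observation $\sigma(\ov f)=-\sigma(f)$ (which the paper states as $\lambda\in\sigma(\ov f)\ssi-\lambda\in\sigma(f)$) combined with $-\Lambda=\Lambda$ gives conjugation-closure, after which the argument of Theorem \ref{unap} is repeated verbatim. Your explicit Fourier--Bohr coefficient computation simply fills in the detail the paper leaves implicit.
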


\begin{proof}
Since $\lambda\in \sigma (\ov f)\ssi -\lambda\in \sigma (f)$, 
the assumption ``$\Lambda$ a group''  implies that
 $\ov f\in {\rm AP}_\Lambda$  whenever $f\in {\rm AP}_\Lambda$.
 Hence ${\rm AP}_\Lambda$ is a $C^*$- subalgebra of $C_b(\R,\C)$.
 The remaining assertions now follow verbatim as in Theorem \ref{unap}.
\end{proof}

 If $\Lambda$ merely is a sub-semigroup, the situation is much more difficult. 
 For corona theorems in this setting, see \cite{boe} and  \cite{bks}.\bigskip

 For $\Lambda\ss\R$, let $[\Lambda]$ denote the $\Q$-vector space generated by  $\Lambda$.

 \begin{lemma}\label{trafo}
Let $Q(t)=\sum_{j=1}^N a_j e^{i\lambda_j t}$ be a generalized trigonometric polynomial.
Then for every  $\Q$-linearly independent subset $\Omega:=\{\omega_1,\dots, \omega_M\}$ of
 $\Lambda=\{\lambda_1,\dots, \lambda_N\}$  with the property that  $[\Omega]=[\Lambda]$, there is  $s\in \N^*$, independent of the coefficients $a_j$, and $q\in C(\T^M,\C)$ such that 
 $$\Phi_{\frac{\Omega}{{}^s}}(q)=Q,$$
  that is
$$ q\bigl(e^{i (\omega_1/s)t}, \dots, e^{i (\omega_M/s)t}\bigr)= Q(t).
$$

\end{lemma}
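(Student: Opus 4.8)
The plan is to use that $\Omega$ is a $\Q$-basis of the space $[\Lambda]$ in order to rewrite each exponential $e^{i\lambda_j t}$ as a Laurent monomial in the smaller frequencies $\omega_k/s$. First I would expand every frequency over this basis: since each $\lambda_j\in\Lambda\ss[\Lambda]=[\Omega]$ and $\Omega$ is $\Q$-linearly independent, there exist unique rationals $r_{jk}\in\Q$ with
$$\lambda_j=\sum_{k=1}^M r_{jk}\,\omega_k,\qquad j=1,\dots,N.$$

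The key step is then to clear all these denominators simultaneously. There are only finitely many rationals $r_{jk}$, so I would choose $s\in\N^*$ to be a common denominator, that is, any positive integer for which $m_{jk}:=s\,r_{jk}\in\Z$ for every $j$ and $k$. The crucial observation is that the $r_{jk}$ are determined \emph{solely} by how the $\lambda_j$ decompose over the fixed basis $\Omega$; hence $s$ depends only on $\Lambda$ and $\Omega$, and not on the coefficients $a_j$, exactly as the statement demands.

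With $s$ fixed, the identity
$$e^{i\lambda_j t}=\exp\Bigl(i\,t\sum_{k=1}^M r_{jk}\,\omega_k\Bigr)=\prod_{k=1}^M\bigl(e^{i(\omega_k/s)t}\bigr)^{m_{jk}}$$
exhibits each term of $Q$ as a Laurent monomial in the unimodular variables $z_k=e^{i(\omega_k/s)t}$. Accordingly I would define
$$q(w_1,\dots,w_M):=\sum_{j=1}^N a_j\prod_{k=1}^M w_k^{\,m_{jk}},$$
a Laurent polynomial in $w_1,\dots,w_M$. On $\T^M$ one has $w_k^{-1}=\ov{w_k}$, so any negative exponent may be replaced by a conjugate; thus $q$ restricts on the torus to an ordinary polynomial in the $w_k$ and $\ov{w_k}$, whence $q\in C(\T^M,\C)$. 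Substituting $w_k=e^{i(\omega_k/s)t}$ finally yields $q\bigl(e^{i(\omega_1/s)t},\dots,e^{i(\omega_M/s)t}\bigr)=Q(t)$, which is the asserted relation.

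I do not expect a genuine obstacle here: essentially all the content is the linear-algebra remark that expressing the $\lambda_j$ over a fixed $\Q$-basis produces rational coefficients whose common denominator $s$ can be chosen once and for all, independently of the $a_j$. The two points deserving a word of care are, first, the continuity claim, which is immediate on $\T^M$ since powers of unimodular variables are handled via conjugation, and second, the fact that the exponents $m_{jk}$ need not be non-negative. It is precisely this last feature that forces the target to be the full $C^*$-algebra $C(\T^M,\C)$ rather than the polydisk algebra $A(\D^M)$, and it is the reason the map in the statement is written with the rescaled frequencies $\Omega/s$.
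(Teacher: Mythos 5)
Your proof is correct and follows essentially the same route as the paper: express each $\lambda_j$ over the $\Q$-basis $\Omega$, clear denominators with a single $s\in\N^*$ (which depends only on the frequencies, not on the $a_j$), and realize $Q$ as a Laurent polynomial in the variables $e^{i(\omega_k/s)t}$, continuous on $\T^M$ since negative powers become conjugates there. The only cosmetic difference is that the paper treats the basis frequencies $\lambda_1,\dots,\lambda_M$ separately (yielding monomials $z_j^s$) and builds $s$ explicitly as a product of the individual denominators, whereas you handle all $j$ uniformly.
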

\begin{proof}
If $\Lambda$ itself is $\Q$-linearly independent, then the uniquely determined function
$$q(w_1,\dots, w_N)=\sum_{j=1}^N a_j w_j$$
satisfies $\Phi_\Lambda(q)=Q$ (see Lemma \ref{cnap+}).
Modulo a re-enumeration, let $\Omega=\{\lambda_1,\dots,\lambda_M\}$ and let $M<j\leq N$. 
Since $\lambda_j\in [\Omega]$, there are $s_j\in \N^*$ and  $s_{n,j}\in \Z$ such that
$$s_j\lambda_j=\sum_{n=1}^M s_{n,j} \lambda_n.$$
Let $s=\prod_{j=M+1}^N s_j$ and 
$ \dis r_{n,j}:=s_{n,j}\,\prod_{k=M+1\atop k\not=j}^N s_k.$
 Then
$$s \lambda_j=\sum _{n=1}^M r_{n,j} \lambda_n,~~ (j=M+1,\dots, N).$$
Hence, whenever $z_j=e^{i  (\lambda_j/s)  t}$, $(j=1,\dots, n)$, 
\begin{eqnarray*}
Q(t)&=&\sum_{j=1}^M a_j e^{i\lambda_jt} +\sum_{j=M+1}^N a_j\, 
e^{i \sum_{n=1}^M (r_{n,j}/s) \lambda_n t}\\
&=&\sum_{j=1}^M a_j e^{i\lambda_jt} + \sum_{j=M+1}^N a_j\, \prod_{n=1}^M 
e^{i (\lambda_n/s) r_{n,j} t}\\
&=& \sum_{j=1}^M a_j z_j^s + \sum_{j=M+1}^N a_j \prod_{n=1}^M  z_n^{r_{n,j}}\\
&=:&  q(z_1,\dots,z_M).
\end{eqnarray*}
We deduce that $\Phi_{\Omega'}(q)=Q$, whith $\Omega'=\{\omega_j/s: j=1,\dots,M\}$.
\end{proof}

 \section{The stable ranks of ${\rm AP}_\Lambda$}
 
 \begin{definition}
 
Let $A$ be  a commutative unital algebra (real or complex)  (or just a commutative unital ring)
 with identity element denoted by 1.
 \begin{enumerate}
 \item [i)]
An $(n+1)$-tuple $(f_1,\dots,f_n,g)\in U_{n+1}(A)$ is  called {\sl reducible}  
 \index{reducible tuple}
 if there exists 
 $(a_1,\dots,a_n)\in A^n$ such that $(f_1+a_1g,\dots, f_n+a_ng)\in U_n(A)$.
\item [(2)] The {\sl Bass stable rank} of $A$, denoted by $\bsr A$,  is the smallest integer $n$ such that every element in $U_{n+1}(A)$ is reducible. 
 If no such $n$ exists, then $\bsr A=\infty$. \index{$\bsr A$}
  \end{enumerate}
  \end{definition}
 
  \begin{definition}
Let $A$ be  a commutative unital Banach algebra.
 The {\it topological stable rank}, $\tsr A$, of $A$ is the least integer
  $n$ for which $U_n(A)$ is dense in $A^n$, or infinite if no such $n$ exists.  
  \end{definition}

It is well known that for Banach algebras, $\bsr A\leq \tsr A$ and that strict inequality
is possible (mostly for Banach algebras of holomorphic functions such as the disk algebra or $\H(\D)$.)
In order to determine the stable ranks of  ${\rm AP}_\Lambda$ we move to the polydisk algebra
and apply  Lemma \ref{cnap+}.

\begin{lemma}\label{decotorus2}\hfill
\begin{enumerate}
 \item[(i)]  
 Let $s\in \N^*$ and let $z^{1/s}=\exp(\frac{1}{s}\log z)$ be the canonical
  branch of the $s$-th root of $z$ on $\C\setminus \;]-\infty, 0]$. 
Let $r=|z|$ and $\theta=\arg z$ where $-\pi<\theta<\pi$. Then
$$g_s:\begin{cases}  2\ov \D\setminus [-2,0]&\to  \T^2\\ z&\mapsto
  \bigl( e^{i \frac{\arccos (r/2)}{^s}} e^{i\theta/s}, e^{-i\frac{\arccos (r/2)}{^s}} e^{i\theta/s}\bigr)
  \end{cases}$$
  is a continuous map, where $\arccos:[-1,1]\to [0,\pi]$ is the  standard inverse of the cosine function.
 \item [(ii)]  The map $$f_s: \begin{cases}\T^2 &\to 2\ov \D\\ 
 (z_1,z_2)&\mapsto   z_1^s+z_2^s\end{cases}$$
 is a continuous surjection  such that $f_s\circ g_s={\rm id}$ on $2\ov \D\setminus [-2,0]$.

\end{enumerate}
\end{lemma}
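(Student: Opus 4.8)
The plan is to handle the two parts separately, with (i) reducing to the continuity of standard building blocks and (ii) to a one-line trigonometric identity together with a short surjectivity argument.

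For (i), the key point is that the excised set $2\ov\D\setminus[-2,0]$ is chosen precisely so that the two scalar ingredients of $g_s$ are continuous. On this set $z\neq 0$, so $r=|z|\in(0,2]$ and $r/2\in(0,1]$ lies in the domain $[-1,1]$ of $\arccos$, making $\arccos(r/2)$ defined and continuous. Removing the segment $[-2,0]$ (which is exactly the part of the negative real axis lying in $2\ov\D$, together with the origin) also removes the branch cut, so that on the remaining set $z\notin\;]-\infty,0]$ and hence $\theta=\arg z$ is the principal argument, continuous with values in $]-\pi,\pi[$. Writing the two components of $g_s$ as $\exp\bigl(i(\theta\pm\arccos(r/2))/s\bigr)$, each is a composition of the continuous maps $z\mapsto|z|$, $z\mapsto\arg z$, $\arccos$ and $\exp$; since both have modulus $1$, $g_s$ is continuous into $\T^2$.

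For (ii), continuity of $f_s$ is clear since it is a polynomial in the coordinates, and for $(z_1,z_2)\in\T^2$ one has $|z_1^s+z_2^s|\le|z_1|^s+|z_2|^s=2$, so $f_s$ maps into $2\ov\D$. The identity $f_s\circ g_s=\mathrm{id}$ on $2\ov\D\setminus[-2,0]$ is then a direct computation: writing $g_s(z)=(w_1,w_2)$ and using $\cos(\arccos(r/2))=r/2$ (valid since $r/2\in[0,1]$),
$$w_1^s+w_2^s=e^{i\theta}\bigl(e^{i\arccos(r/2)}+e^{-i\arccos(r/2)}\bigr)=2\cos\bigl(\arccos(r/2)\bigr)\,e^{i\theta}=r\,e^{i\theta}=z.$$
This identity already forces $f_s$ to hit every point of $2\ov\D\setminus[-2,0]$, so for full surjectivity it only remains to reach the segment $[-2,0]$. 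For $w=-R$ with $R\in[0,2]$, taking $z_1=\exp\bigl(i(\pi+\arccos(R/2))/s\bigr)$ and $z_2=\exp\bigl(i(\pi-\arccos(R/2))/s\bigr)$ gives $z_1^s+z_2^s=-2\cos(\arccos(R/2))=-R=w$, completing the proof that $f_s$ surjects onto all of $2\ov\D$.

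I expect no serious obstacle; the only point deserving care is recognizing why the cut $[-2,0]$ is unavoidable in (i): it is exactly the locus where the principal argument jumps, so a continuous section of $f_s$ cannot be extended across it, whereas (ii) nevertheless stays surjective because the two preimages over the cut can simply be produced by hand, as above.
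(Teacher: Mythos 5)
Your proof is correct; the paper itself omits the argument entirely ("The straightforward proof is left to the reader"), and your direct verification — continuity of the branch data off the cut $[-2,0]$, the identity $w_1^s+w_2^s=2\cos(\arccos(r/2))e^{i\theta}=z$, and the explicit preimages over the segment $[-2,0]$ to finish surjectivity — is exactly the intended routine check.
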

The straightforward proof is left to the reader.

Our key to the calculation of the Bass stable rank of ${\rm AP}_\Lambda$  
will be the following class of examples of non-reducible tuples in the polydisk algebra.
Let us emphasize that  non-reducibility in $A(\D^{N})$ will not be sufficient;
we need invertible tuples in $A(\D^{N})$ that are non-reducible in 
$C(\T^{N},\C)$,  which is a stronger property. In fact $(z_1, 1-z_1z_2)$ is an invertible tuple
that is  non-reducible in $A(\D^2)$ (since $F(z_1, \ov z_1):=z_1+h(z_1,\ov z_1) (1-|z_1|^2)$ is an extension
of the identity map on $\T$, and so has a zero in $\D$), but of course, 
$F(z_1,z_2)=z_1+h^*(z_1,z_2)(1-z_1z_2)\not=0$ on $\T\times \T$ whenever 
$h^*\equiv 0$.

 \begin{lemma}\label{fundamentalex}
For $j=1,\dots,2N$ and $s\in \N^*$,  let
$$f_j(z_1,\dots, z_{4N})=z^s_{2j-1}+z^s_{2j}-1,$$
and 
$$g=\frac{1}{4}-\sum_{j=1}^N f_j f_{N+j}.$$
Then $\bs F:=(f_1,\dots,f_N, g)$ is an invertible $(N+1)$-tuple in $A(\D^{4N})$ that is neither
reducible in $A(\D^{4N})$ nor in $C(\T^{4N},\C)$.
\end{lemma}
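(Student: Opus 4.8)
The plan is to establish three things separately: invertibility of the tuple, non-reducibility in $C(\T^{4N},\C)$, and then deduce non-reducibility in $A(\D^{4N})$ as an immediate corollary, since any reduction in the analytic algebra is in particular a reduction in the ambient algebra of continuous functions on the distinguished boundary. First I would verify that $\bs F=(f_1,\dots,f_N,g)\in U_{N+1}(A(\D^{4N}))$. Here one exhibits an explicit B\'ezout combination: because $g=\tfrac14-\sum_{j=1}^N f_jf_{N+j}$, the identity $g+\sum_{j=1}^N f_j\cdot f_{N+j}=\tfrac14$ shows that $(f_1,\dots,f_N,g)$ generates the unit ideal (multiply through by $4$), so the tuple is invertible with coefficients built from the $f_{N+j}$ and the constant $4$.

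The heart of the matter is non-reducibility in $C(\T^{4N},\C)$. Suppose for contradiction that there exist $a_1,\dots,a_N\in C(\T^{4N},\C)$ so that $(f_1+a_1g,\dots,f_N+a_Ng)\in U_N(C(\T^{4N},\C))$, i.e.\ $\sum_{j=1}^N|f_j+a_jg|>0$ everywhere on $\T^{4N}$. The strategy is to locate a point (or a subtorus of points) where all $N$ functions $f_j+a_jg$ vanish simultaneously, contradicting invertibility. On $\T^{4N}$ each coordinate $z_k$ has modulus one, so $z_{2j-1}^s+z_{2j}^s$ ranges over $2\ov\D$; thus $f_j=z_{2j-1}^s+z_{2j}^s-1$ can be made to vanish, and the set $\{f_j=0\}$ is nonempty for each $j$. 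The idea is to force all $f_j=0$ at once: choose coordinates so that $z_{2j-1}^s+z_{2j}^s=1$ for each $j=1,\dots,2N$. On the resulting common zero set of $f_1,\dots,f_{2N}$ we then have $g=\tfrac14-\sum_{j=1}^N f_jf_{N+j}=\tfrac14\neq 0$, so $f_j+a_jg=a_jg$ there, which is nonzero only if $a_j\neq 0$. That alone does not yet give a contradiction, so the mechanism must be more subtle: I expect the real argument uses the map $g_s$ from Lemma~\ref{decotorus2} to build a continuous extension of an identity-type map and then invokes a degree/topological obstruction, exactly in the spirit of the $(z_1,1-z_1z_2)$ example flagged in the text. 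Concretely, the pairing of $f_j$ with $f_{N+j}$ inside $g$ is designed so that on the locus $f_j=0$ the combination $g$ records the value of $f_{N+j}$, letting one simultaneously control two independent coordinate blocks.

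The concrete construction I would carry out: using $g_s$, parametrize points of $\T^2$ lying over each value in $2\ov\D$, so that I can solve $z_{2j-1}^s+z_{2j}^s=w_j$ continuously in $w_j$ away from the slit $[-2,0]$. This lets me treat $(f_1,\dots,f_N)$ and $(f_{N+1},\dots,f_{2N})$ as essentially free $\C$-valued coordinates ranging over $2\ov\D-1$. A putative reduction $f_j+a_jg\neq 0$ then pulls back to a system on a product of punctured disks (or tori) whose nonvanishing is topologically impossible, because the reduced system would furnish a continuous nowhere-zero extension of a map that must have a zero by a winding-number argument; this is where the slit $[-2,0]$ and the section $g_s$ matter, since they are precisely what upgrades ``nonreducible in $A(\D^{4N})$'' to ``nonreducible in $C(\T^{4N},\C)$.''

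The main obstacle I anticipate is pinning down the exact topological invariant that is violated and verifying that the doubling trick (pairing $f_j$ with $f_{N+j}$ in $g$) genuinely prevents a continuous reduction rather than merely an analytic one. The delicate point is that on $\T^{4N}$ one has full freedom to choose the arguments $\ov z_k$ independently, so the naive analytic obstruction disappears; the role of $g$ must be to encode enough coupling that any attempted continuous reduction still forces a common zero. I would isolate this as the key lemma, proving it by restricting to a suitable $2N$-dimensional subtorus on which the map $(f_1,\dots,f_N,g)$ becomes a model example whose non-reducibility can be checked by an explicit homotopy or degree computation, and then lifting the conclusion back to the full torus.
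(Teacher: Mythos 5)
Your setup is sound: the B\'ezout identity $\sum_{j=1}^N f_jf_{N+j}+g=\frac{1}{4}$ gives invertibility, and non-reducibility in $C(\T^{4N},\C)$ does imply non-reducibility in $A(\D^{4N})$, since any analytic reduction restricts to a continuous one on the distinguished boundary. But the heart of the proof is exactly the step you defer (``the mechanism must be more subtle'', ``the main obstacle I anticipate is pinning down the exact topological invariant''), and the obstruction you gesture at is not the right one. The decisive idea is \emph{not} to treat $(f_1,\dots,f_N)$ and $(f_{N+1},\dots,f_{2N})$ as independent free coordinates, but to couple them: restrict to the subtorus $z_{j+2N}=\ov z_j$ $(j=1,\dots,2N)$, on which $f_{N+j}=\ov{f_j}$ and hence $g=\frac{1}{4}-\sum_{j=1}^N|f_j|^2$ becomes real-valued, vanishing precisely where $\sum_j|f_j|^2=\frac{1}{4}$. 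Then, using the section $g_s$ of Lemma \ref{decotorus2} on the disk $D=\{|\zeta-1|\leq 1/2\}$ (which avoids the slit $[-2,0]$), one turns $w_j:=f_j$ into a genuinely free coordinate ranging over the ball $(1/2)\mathbf B_N\ss\C^N$, so that a putative continuous reduction produces a map $\bs w\mapsto \bigl(w_j+h_j^{**}(\bs w)\,(\frac{1}{4}-\sum_k|w_k|^2)\bigr)_{j=1,\dots,N}$ which equals the identity on the sphere $\partial\bigl((1/2)\mathbf B_N\bigr)$ and is zero-free on the ball --- contradicting Brouwer's no-retract theorem.

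This is where your sketch would fail as written: a ``winding-number argument'' on ``a product of punctured disks'' is a product of one-dimensional obstructions and cannot rule out the reduction of an $(N+1)$-tuple for $N\geq 2$; the relevant invariant is the nonexistence of a retraction of a ball in $\R^{2N}$ onto its boundary sphere. The entire design of $g$ --- the pairing $f_jf_{N+j}$ and the constant $\frac{1}{4}$ --- exists precisely to manufacture that ball-versus-sphere configuration after the conjugate-diagonal restriction. Without identifying this restriction and the resulting real-valued form of $g$, the argument does not close.
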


\begin{proof}
It is clear that $\bs F\in U_{N+1}(A(\D^{4N}))$. Let $\bs h=(h_1,\dots, h_N)\in C(\T^{4N},\C)^N$ and consider in $C(\T^{4N},\C)$  the $N$-tuple
$$ \bs H:=\Bigl(f_1+ h_1g, \dots,  f_N+h_Ng\Bigr).
$$
We claim that $\bs H(z_1^{(0)},\dots, z_{4N}^{(0)})=\bs 0_{N}$ for some
$(z_1^{(0)},\dots, z_{4N}^{(0)})\in \T^{4N}$. To this end,  we make the following  transformations.
 
  Let $z_{j+2N}=\ov z_j$ for $j=1,\dots, 2N$. Then, with
$\bs \xi:=(z_1,\dots, z_{2N}, \ov z_{1},\dots, \ov z_{2N})$,
$$\bs H(\bs\xi)=
\biggl( f_1(\bs\xi)+h_1(\bs\xi)\; \Bigl(\frac{1}{4}- \sum_{j=1}^N |f_j(\bs\xi)|^2\Bigr),\; \dots,\;
f_N(\bs\xi)+h_N(\bs\xi)\; \Bigl(\frac{1}{4}- \sum_{j=1}^N |f_j(\bs\xi)|^2\Bigr)
\biggr).
$$
Let  $D:=\{\zeta\in\C: |\zeta-1|\leq 1/2\}$. Then $D\ss 2\ov\D \setminus [-2,0]$.
If $u_{2j-1}\in D$, $j=1,\dots, N$,  then, using Lemma \ref{decotorus2} and $g_s=(G_1,G_2)$,
we put $z_{2j-1}:=G_1(u_{2j-1})$,   $z_{2j}:=G_2(u_{2j-1})$, and
$$h_j^*(u_1,u_3,\dots, u_{2N-1}):=$$
$$ h_j\Bigl(G_1(u_1), G_2(u_1),  \dots, G_1(u_{2N-1}), G_2(u_{2N-1}),\;
\ov G_1(u_1), \ov G_2(u_1),  \dots, \ov G_1(u_{2N-1}), \ov G_2(u_{2N-1})
\Bigr).$$
Since $u_{2j-1}=z_{2j-1}^s+z_{2j}^s$, it suffices to show that  the functions

$$u_{2j-1}-1 +h_j^*(u_1,u_3,\dots, u_{2N-1}) \; 
\Bigl(\frac{1}{4}-\sum_{j=1}^N|u_{2j-1} -1|^2\Bigr)
$$ 
have  a common zero in $D^N$.
  To do so,   let $w_j:=u_{2j-1}-1$ and
put $$h_j^{**}(w_1,\dots, w_N):=h_j^*(w_1+1,\dots, w_N+1), |w_j|\leq 1/2.$$
Since  then unit ball $\mathbf B_N=\{(z_1,\dots,z_N)\in \C^N: \sum_{j=1}^N |z_j|^2\leq 1\}$
has the property that 
$\mathbf B_N\ss \ov\D^N$, it follows that
the functions $h_j^{**}$ are continuous and bounded on $(1/2)  \mathbf B_N$.

By Brouwer's  fixed-point theorem (or no-retract theorem), see for example \cite[p. 127]{eng},
the identity map on $\partial (1/2)\mathbf B_N$
has no zero-free extension to $(1/2)\mathbf B_N$. Therefore, the map
$$\biggl(w_1+h_1^{**}(w_1,\dots,w_N)\, \Bigl(\frac{1}{4}-\sum_{j=1}^N |w_j|^2\Bigr),
\; \dots\;, \;w_N+h_N^{**}(w_1,\dots, w_N)\,\Bigl(\frac{1}{4}-\sum_{j=1}^N 
|w_j|^2\Bigr)\biggr)$$
admits a zero $\Xi:=(w_1^{(0)},\dots,  w_N^{(0)})$ in the open  ball $(1/2)\mathbf B_N$. 
 Thus we see that with
$$z_{2j-1}^{(0)}:= G_1(w_j^{(0)}+1),\;\;z_{2j}^{(0)}:=G_2(w_j^{(0)}+1), \;\;(j=1,\dots, N)$$
and $z_{j+2N}^{(0)}=\ov z_j^{(0)}$ for $j=1,\dots, 2N$, 
$(z_1^{(0)},\dots, z_{4N}^{(0)})\in \T^{4N}$ and
$$H(z_1^{(0)},\dots, z_{4N}^{(0)})=\bs 0_N.
$$
\end{proof}

 \begin{theorem}\label{bsraplambda}
The Bass and topological stable ranks of the Banach algebras ${\rm AP}_\Lambda$  are infinite
whenever the dimension of the vector space $[\Lambda]$ generated by $\Lambda$
over $\Q$ is infinite.
\end{theorem}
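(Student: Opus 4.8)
The plan is to prove that $\bsr {\rm AP}_\Lambda=\infty$; since $\bsr A\le\tsr A$ for Banach algebras, this forces $\tsr {\rm AP}_\Lambda=\infty$ as well. Concretely, I would fix an arbitrary $N\in\N^*$ and exhibit an invertible $(N+1)$-tuple in ${\rm AP}_\Lambda$ that is not reducible; since $N$ is arbitrary, $\bsr {\rm AP}_\Lambda=\infty$ follows. A preliminary reduction supplies positive frequencies: because $\Lambda\ss(\Lambda\inter\R^+)\union\{0\}\union\bigl(\Lambda\inter(-\R^+)\bigr)$, the space $[\Lambda]$ is the sum $[\Lambda\inter\R^+]+[\Lambda\inter(-\R^+)]$, so at least one summand is infinite-dimensional. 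As $f\mapsto\ov f$ is a ring isomorphism ${\rm AP}_\Lambda\to{\rm AP}_{-\Lambda}$ which is isometric and preserves $U_n$, reducibility and density of $U_n$, the two algebras have the same Bass and topological stable ranks; replacing $\Lambda$ by $-\Lambda$ if necessary, I may assume that $\Lambda\inter\R^+$ contains, for every $N$, a $\Q$-linearly independent set $\{\mu_1,\dots,\mu_{4N}\}$.

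With $\Lambda_0=\{\mu_1,\dots,\mu_{4N}\}$, I transport the fundamental example of Lemma \ref{fundamentalex} (with parameter $1$) into ${\rm AP}_\Lambda$ via the homomorphism $\Phi_{\Lambda_0}$ of Lemma \ref{cnap+}: set $F_j:=\Phi_{\Lambda_0}(f_j)$ and $G:=\Phi_{\Lambda_0}(g)$, so that $F_j(t)=e^{i\mu_{2j-1}t}+e^{i\mu_{2j}t}-1$ for $j=1,\dots,2N$ and $G=\tfrac14-\sum_{j=1}^N F_jF_{N+j}$. All frequencies occurring here are nonnegative integer combinations of the $\mu_i$, hence lie in the semigroup $\Lambda$, so $F_j,G\in {\rm AP}_\Lambda$; applying the algebra homomorphism $\Phi_{\Lambda_0}$ to a B\'ezout identity for $(f_1,\dots,f_N,g)$ shows $\bs F:=(F_1,\dots,F_N,G)\in U_{N+1}({\rm AP}_\Lambda)$.

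Suppose, for a contradiction, that $\bs F$ is reducible: there are $A_1,\dots,A_N\in {\rm AP}_\Lambda$ with $\inf_\R\sum_{j=1}^N|F_j+A_jG|=:\delta>0$. Using Theorem \ref{mainap}(2) I replace each $A_j$ by a trigonometric polynomial $q^{(j)}$ with $\sigma(q^{(j)})\ss\Lambda$, chosen close enough in sup-norm that $\inf_\R\sum_{j=1}^N|F_j+q^{(j)}G|\ge\delta/2>0$. Now all frequencies of $F_j$, $G$ and the $q^{(j)}$ lie in a finite set $\Lambda_1\ss\Lambda$, which I arrange to contain $\{\mu_1,\dots,\mu_{4N}\}$ (padding coefficients by zero if needed); I then extend $\{\mu_1,\dots,\mu_{4N}\}$ to a $\Q$-basis $\Omega=\{\omega_1,\dots,\omega_M\}\ss\Lambda_1$ of $[\Lambda_1]$ with $\omega_i=\mu_i$ for $i\le 4N$. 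By Lemma \ref{trafo} there is a single $s\in\N^*$ and functions $P_j\in C(\T^M,\C)$ with $\Phi_{\Omega/s}(P_j)=F_j+q^{(j)}G$. Since $\{\omega_i/s\}$ is again $\Q$-linearly independent, Kronecker's Theorem \ref{kron2} shows $\Phi_{\Omega/s}$ is an isometric injective algebra homomorphism of $C(\T^M,\C)$ into ${\rm AP}$; as $F_j$, $G$ and $q^{(j)}$ individually lie in its image, the decomposition passes to the pieces and $P_j=\widehat F_j+\widehat q_j\,\widehat G$, where $\widehat F_j(z)=z_{2j-1}^s+z_{2j}^s-1$ and $\widehat G=\tfrac14-\sum_{j=1}^N\widehat F_j\widehat F_{N+j}$ depend only on $(z_1,\dots,z_{4N})$, while $\widehat q_j\in C(\T^M,\C)$.

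Finally I read off the contradiction. The bound $\sum_j|F_j+q^{(j)}G|\ge\delta/2$ on $\R$ says $\sum_j|P_j|\ge\delta/2$ on the curve $\{(e^{i\omega_1t/s},\dots,e^{i\omega_Mt/s}):t\in\R\}$, which is dense in $\T^M$ by Kronecker's theorem; by continuity $\sum_j|P_j|\ge\delta/2$ on all of $\T^M$, so $(P_1,\dots,P_N)\in U_N(C(\T^M,\C))$. Freezing the last $M-4N$ coordinates at arbitrary constants $c_{4N+1},\dots,c_M\in\T$ and writing $\phi_j(z_1,\dots,z_{4N}):=\widehat q_j(z_1,\dots,z_{4N},c_{4N+1},\dots,c_M)\in C(\T^{4N},\C)$, the restriction of this invertible tuple shows that $(\widehat F_1+\phi_1\widehat G,\dots,\widehat F_N+\phi_N\widehat G)$ is invertible in $C(\T^{4N},\C)$; that is, the tuple $(\widehat F_1,\dots,\widehat F_N,\widehat G)$ is reducible in $C(\T^{4N},\C)$. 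But this is exactly the tuple of Lemma \ref{fundamentalex} with parameter $s$, which is non-reducible in $C(\T^{4N},\C)$ — a contradiction. I expect the spectral control in the third paragraph to be the main obstacle: the reducing functions $A_j$ have Bohr spectra scattered throughout $\Lambda$, not confined to the group generated by the $\mu_i$, so one cannot argue on the fixed torus $\T^{4N}$. Lemma \ref{trafo} is precisely what lets me pass to a larger torus $\T^M$ on which everything becomes a Laurent or continuous function, while the decisive feature — that $\widehat F_j$ and $\widehat G$ still involve only the original $4N$ coordinates — renders the extra coordinates harmless, so that freezing them recovers the rigid situation of Lemma \ref{fundamentalex}.
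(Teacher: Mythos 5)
Your proof is correct and follows essentially the same route as the paper: the same reduction to positive frequencies, transport of the fundamental example of Lemma \ref{fundamentalex} via $\Phi_{\Lambda_0}$, approximation of the reducing tuple by trigonometric polynomials, and the passage to $C(\T^M,\C)$ via Lemma \ref{trafo} and Kronecker density. The only difference is cosmetic: you make explicit (by freezing the extra coordinates) why reducibility on the larger torus contradicts Lemma \ref{fundamentalex}, a step the paper leaves implicit.
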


\begin{proof}
Let $A={\rm AP}_\Lambda$. We may suppose, without loss of generality, that $\Lambda$
contains infinitely many $\Q$-linearly independent {\it positive} reals; otherwise
the algebra 
$${\rm AP}_{-\Lambda}=\{\ov f: f\in {\rm AP}_\Lambda\},$$ 
which is isomorphic isometric to  ${\rm AP}_\Lambda$, has to be considered.

Since $\bsr A\leq \tsr A$ for any Banach algebra $A$, it suffices to show that $\bsr\,{\rm A}=\infty$.
Fix $N\in \N^*$ and consider the algebras $C(\T^{4N},\C)$ and   $A(\D^{4N})$.  
Let $\bs a:=(\bs f,g):=(f_1,\dots, f_N,g)$ be an invertible $(N+1)$-tuple in $A(\D^{4N})$ such that
$$
(\bs{\tilde  f}\bigl(z_1,\dots,z_{4N}), \tilde g(z_1,\dots,z_{4N})\bigr):=\bigl(\bs f (z_1^\nu, \dots, z_{4N}^\nu), g(z_1^\nu,\dots,  z_{4N}^\nu)\bigr)
$$
 is not reducible in $C(\T^{4N},\C)$ for any $\nu\in \N^*$ 
 (such a tuple exists by  Lemma \ref{fundamentalex}).
Then $\bs {\tilde a}:=(\bs {\tilde f}, \tilde g)$ is  not reducible in $C(\T^{m}, \C)$ for every $m\geq 4N$.

Let $\{\lambda_1,\dots,\lambda_{4N}\}$ be a set of positive reals in $\Lambda$
 that is independent over $\Q$.
Let 
$$\mbox{$\bs F(t):=\bs f(e^{i\lambda_1t},\dots ,e^{i\lambda_{4N}t})$ and
$G(t):=g(e^{i\lambda_1t},\dots ,e^{i\lambda_{4N}t})$.}$$
 We claim that
$$\bs A(t):= \bs a(e^{i\lambda_1t},\dots ,e^{i\lambda_{4N}t})$$
is an invertible $(N+1)$-tuple in ${\rm AP}_{\Lambda}$ that is not reducible in AP 
(and a fortiori not in ${\rm AP}_{\Lambda}$). 
 Assume for the moment that this
has been verified. Then we may conclude that  for $A={\rm AP}$ and $A={\rm AP}_\Lambda$,
$\bsr\,A\geq  N+1$.  Since $N$ was arbitrarily chosen,  we deduce that 
$\bsr {\rm AP}=\bsr\, {\rm AP}_\Lambda=\infty$.

Let us introduce the following notation: if $\bs a,\bs b\in A^m$, then $\bs a\cdot \bs b:=
\sum_{j=1}^m a_jb_j$.
To verify the claim, we note that $\bs a\cdot \bs b=1$ for 
$\bs a,\bs b\in A(\D^{4N})^{N+1}$
obviously implies by Lemma \ref{cnap+} that  $\bs A\cdot \bs B=1$ in ${\rm AP}_\Lambda$, where 
$$\bs B(t):= \bs b(e^{i\lambda_1t},\dots ,e^{i\lambda_{4N}t}).$$
In view of achieving a  contradiction, suppose that $\bs A$ is reducible in AP. Then there
exists $\bs H=(H_1,\dots, H_N)\in ({\rm AP})^N$ such that 
$$\bs F+ \bs H G\in U_N({\rm AP}).$$
Let $\bs F=(F_1,\dots, F_N)$.
Hence, by Theorem \ref{unap},
$$
\mbox{$\sum_{j=1}^N |F_j+H_j G|\geq \delta>0$ on $\R$}.
$$
For $j=1,\dots, N$, let $H_j^*(t)=\sum_{k=1}^{M_j} \sigma_{k,j} e^{i\lambda_{k,j}t}\in {\rm AP}$
 be chosen 
 close to $H_j(t)$ (uniformly in $t$)  so that
\begin{equation}\label{kronni}
\mbox{$\sum_{j=1}^N |F_j+H_j^* G|\geq \delta/2>0$ on $\R$}
\end{equation}
(note that  ${\rm AP}\ss C_b(\R,\C)$). 

By a standard result in linear algebra, there is 
a subset $S'$ of  $$S=\{\lambda_{k,j}: k=1,\dots,M_j,\;  j=1,\dots, N \}$$ such that the elements  in $\Lambda:=S' \union \{\lambda_1,\dots,\lambda_{4N}\} $
 are independent over $\Q$ and such that  
 $$[\Lambda]=\Bigl[ \{\lambda_1,\dots,\lambda_{4N}\}\union S\Bigr].$$
  Let $L$ be the cardinal of $\Lambda$; that is 
  $$\Lambda=\{\lambda_1,\dots,\lambda_{4N}, \lambda_{4N+1},\dots, \lambda_L\}.$$
Then, by Lemma \ref{trafo}, there exists $s\in \N^*$ such that for every $j\in\{1,\dots,N\}$,
 $$H^*_j=\Phi_{\frac{\Lambda}{{}^s}}(h_j^*)$$ for some  function  
$$h^*_j(z_1,\dots, z_L)\in C(\T^L, \C),$$
where the evaluation functional is given by
$$\Phi_{\frac{\Lambda}{{}^s}}(h)(t)=h\bigl(e^{i (\lambda_1/s) t}, \dots, e^{i (\lambda_L/s) t}\bigr).$$

Note that by Lemma \ref{cnap+}, $\Phi_{\frac{\Lambda}{{}^s}}$ is injective. Moreover,
$$\Phi^{-1}_{\frac{\Lambda}{{}^s}}(F_j)(z_1,\dots,z_{4N})=f_j(z_1^s,\dots,z_{4N}^s),$$
as well as
$$\Phi^{-1}_{\frac{\Lambda}{{}^s}}(G)(z_1,\dots,z_{4N})= g(z_1^s,\dots,z_{4N}^s).$$

Since by Kronecker's Theorem  \ref{kron2}
$$\{(e^{i (\lambda_1/s) t}, \dots, e^{i (\lambda_L/s) t}): t\in \R\}$$
is dense in  $\T^L$, we obtain  from 
 $$\sum_{j=1}^N\Bigl |f_j\bigl(e^{i\lambda_1 t},\dots, e^{i\lambda_{4N} t }\bigr)
+h_j^*\bigl(e^{(i\lambda_1/s) t},\dots, e^{(i\lambda_L/s) t }\bigr)\,
g\bigl(e^{i\lambda_1 t},\dots, e^{i\lambda_{4N} t }\bigr)\Bigr|\buildrel\geq_{\zit{kronni}}^{}
 \delta/2>0$$
that
$$\mbox{$\dis \sum_{j=1}^N |f_j(z_1^s,\dots,z_{4N}^s)+h^*_j(z_1,\dots,z_L)\,
 g(z_1^s,\dots,z_{4N}^s)|\geq \delta/2>0 $ on $\T^L$}.$$
This tells us that $(\bs {\tilde f}, \tilde g)$ is reducible in  $C(\T^L,\C)$; a contradiction.
\end{proof}

 Without the  assumption that $\dim [\Lambda]=\infty$, the Bass and topological stable ranks
of  ${\rm AP}_\Lambda$  may be one: just take $\Lambda=\Z$. Then ${\rm AP}_\Lambda$ 
coincides with the algebra of $2\pi$-periodic, continuous  functions on $\R$, which
is isomorphic isometric to $C(\T,\C)$, and $\bsr C(\T,\C)=\tsr C(\T,\C)= 1$
 (see  \cite{vas} and  \cite[p.~8]{mr}).

 More general, we have the following  result:

\begin{theorem}
Suppose that $\Lambda_0=\{\lambda_1,\dots,\lambda_N\}$  is a set of $\Q$-linearly independent, positive reals.
Let $$\Lambda_1:=\Bigl\{\sum_{j=1}^N s_j\lambda_j: s_j\in \N\Bigr\}$$ 
and
$$\Lambda_2=\Bigl\{\sum_{j=1}^N s_j\lambda_j: s_j\in \Z\Bigr\}.$$

Then $$A_1:={\rm AP}_{\Lambda_1}=\{f\in {\rm AP}: \sigma(f)\ss \Lambda_1\}$$
is a uniformly closed subalgebra of $\ap$ that is isomorphic isometric to $A(\D^N)$
and   $$A_2:={\rm AP}_{\Lambda_2}=\{f\in {\rm AP}: \sigma(f)\ss \Lambda_2\}$$
is a uniformly closed subalgebra of AP that is isomorphic isometric to $C(\T^N,\C)$.
In particular, 
$$\bsr A_1=\bsr A(\D^N)=\left\lfloor \frac{N}{2}\right\rfloor+1,\sp
\tsr A_1=\tsr A(\D^N)= N+1,$$
$$\bsr A_2=\bsr C(\T^N,\C)= \left\lfloor \frac{N}{2}\right\rfloor+1\;
\text{and}\;
\tsr A_2=\tsr C(\T^N,\C)= \left\lfloor \frac{N}{2}\right\rfloor+1.$$
\end{theorem}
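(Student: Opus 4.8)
The plan is to realize each $A_i$ as the isometric image of a familiar algebra under an evaluation homomorphism, and then to read off the four stable ranks from the known values for $A(\D^N)$ and $C(\T^N,\C)$. First I would take the evaluation map $\Phi_{\Lambda_0}$ of Lemma~\ref{cnap+} and extend it by the same formula $f\mapsto\bigl(t\mapsto f(e^{i\lambda_1t},\dots,e^{i\lambda_Nt})\bigr)$ to all of $C(\T^N,\C)$; call this extension $\Psi$. Because $\{\lambda_1,\dots,\lambda_N\}$ is $\Q$-linearly independent, Kronecker's Theorem~\ref{kron2} makes $E=\{(e^{i\lambda_1t},\dots,e^{i\lambda_Nt}):t\in\R\}$ dense in $\T^N$, so $\|\Psi(f)\|_\infty=\sup_E|f|=\|f\|_\infty$ for every $f\in C(\T^N,\C)$. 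Thus $\Psi$ is an isometric algebra homomorphism whose restriction to $A(\D^N)$ is exactly $\Phi_{\Lambda_0}$. I would then check that the images land where claimed: a monomial $z_1^{s_1}\cdots z_N^{s_N}$ with $s_j\in\N$ maps to $e^{i(\sum_j s_j\lambda_j)t}$, whose spectrum is the singleton $\{\sum_j s_j\lambda_j\}\ss\Lambda_1\ss\R^+\cup\{0\}$; since polynomials are dense in $A(\D^N)$ and $A_1={\rm AP}_{\Lambda_1}$ is closed (Lemma~\ref{cnap+}(1), $\Lambda_1$ being a sub-semigroup containing $0$), we get $\Phi_{\Lambda_0}(A(\D^N))\ss A_1\ss\ap$. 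Allowing $s_j\in\Z$ and using that trigonometric polynomials are dense in $C(\T^N,\C)$ by Stone--Weierstrass gives $\Psi(C(\T^N,\C))\ss A_2={\rm AP}_{\Lambda_2}$ in the same way.

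The crux is surjectivity. Given $g\in A_1$, Theorem~\ref{mainap}(2) furnishes generalized trigonometric polynomials $q_n\to g$ uniformly with $\sigma(q_n)\ss\Lambda_1$. Each frequency $\mu\in\sigma(q_n)$ can be written $\mu=\sum_j s_j\lambda_j$ in a way that is \emph{unique}, precisely because the $\lambda_j$ are $\Q$-linearly independent; this uniqueness lets me lift $q_n$ unambiguously to a polynomial $p_n\in\C[z_1,\dots,z_N]$ with $\Phi_{\Lambda_0}(p_n)=q_n$. Since $\Phi_{\Lambda_0}$ is isometric, $\|p_n-p_m\|_\infty=\|q_n-q_m\|_\infty$, so $(p_n)$ is Cauchy in the complete algebra $A(\D^N)$ and converges to some $p$ with $\Phi_{\Lambda_0}(p)=g$. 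Hence $\Phi_{\Lambda_0}\colon A(\D^N)\to A_1$ is an isometric isomorphism. The identical argument, with $s_j\in\Z$ and $p_n$ a trigonometric polynomial, shows that $\Psi\colon C(\T^N,\C)\to A_2$ is an isometric isomorphism.

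Finally, since the Bass stable rank is invariant under algebra isomorphism and the topological stable rank under isometric isomorphism of Banach algebras, the four equalities follow from the known values $\bsr A(\D^N)=\lfloor N/2\rfloor+1$ and $\tsr A(\D^N)=N+1$, together with Rieffel's computation $\bsr C(\T^N,\C)=\tsr C(\T^N,\C)=\lfloor(\dim\T^N)/2\rfloor+1=\lfloor N/2\rfloor+1$. The only genuinely delicate point in the whole argument is the surjectivity step, and within it the need to lift each Bohr frequency back to a single exponent vector in a consistent manner; this is exactly where $\Q$-linear independence is indispensable, since without the uniqueness of $\mu=\sum_j s_j\lambda_j$ the lifts $p_n$ would not be well defined and the isometry could not be invoked. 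Everything else is a routine combination of the approximation theorem, Kronecker density, and completeness.
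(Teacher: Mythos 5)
Your proposal is correct and follows essentially the same route as the paper: establish that the evaluation map $\Phi_{\Lambda_0}$ (extended to $C(\T^N,\C)$ for $A_2$) is an isometric isomorphism onto $A_j$ by lifting the approximating generalized trigonometric polynomials from Theorem \ref{mainap}(2) back to (trigonometric) polynomials on $\T^N$, using the isometry to get a Cauchy sequence, and then transferring the known stable ranks of $A(\D^N)$ and $C(\T^N,\C)$. Your explicit remark that $\Q$-linear independence makes the lift well defined is a useful point the paper leaves implicit; the only quibble is attribution — the formula $\lfloor N/2\rfloor+1$ for $C(\T^N,\C)$ is Vaserstein's, not Rieffel's.
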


\begin{proof}
Let $\tilde A_1=A(\D^N)$ and $\tilde A_2=C(\T^N,\C)$. In view of Lemma \ref{cnap+},
it suffices to show that the evaluation map 
$$\Phi_{\Lambda_0}: \begin{cases} \tilde A_j &\to A_j\\
                                        f&\mapsto \Phi_{\Lambda_0}(f),
                                         \end{cases}
                                         $$
where $\Phi_{\Lambda_0} (f)(t) := f(e^{i\lambda_1t}, \dots,    e^{i\lambda_Nt})$
is a surjection. 

Let $F\in A_1$. By Theorem \ref{mainap}, for $n\in \N^*$, there is a trigonometric polynomial 
$Q_n$ whose Bohr spectrum $\sigma(Q_n)$ is contained in $\Lambda_1$ such that
$||Q_n-F||_\infty<1/n$. Now
$$Q_n(t)=\sum_{j=1}^M a_j  e^{i \sum_{k=1}^N n_{k,j}\lambda_kt}
=\sum_{j=1}^M a_j \prod_{k=1}^N e^{i n_{k,j}\lambda_k t},$$
 where $n_{k,j}\in\N$ and $a_j\in\C$. The polynomial
 $$q_n(z_1,\dots,z_N):=\sum_{j=1}^M a_j \prod_{k=1}^N z_k^{n_{k,j}}$$
 now has the property that
 $$\Phi_{\Lambda_0}(q_n)(t)=Q_n(t).$$
 Since $\Phi_{\Lambda_0}$ is an isometry (Lemma \ref{cnap+}), we finally obtain
 that $\Phi_{\Lambda_0}(f)=F$, where $f$ is the limit point of the Cauchy sequence $(q_n)$
 in $A(\D^N)$.

 If $F\in A_2$, then we use that $e^{i m\lambda_k t} =z_k^m$ whenever $m\geq 0$
 and $e^{i m\lambda_k t} =\ov{z_k}^{|m|}$ whenever $m<0$ and proceed in a similar way
 as above.  Recall that $C(\T^N,\C)$ is, by Weierstrass' theorem,  the
 uniform closure of the polynomials in $z_j$ and $\ov z_j$.  
 
The remaining assertions follow from the corresponding results in $A(\D^N)$ and
$C(\T^N,\C)$,  since the Bass and topological stable ranks are invariant under 
isomorphic isometries.
 Recall that
 $ \bsr A(\D^N)=\left\lfloor \frac{N}{2}\right\rfloor+1$  by \cite[Corollay 3.13]{cosu},
 $\tsr A(\D^N)=N+1$ by  \cite[Theorem 3.1]{cs} and
 $\bsr C(\T^N,\C)=\tsr C(\T^N,\C)=\left\lfloor \frac{N}{2}\right\rfloor+1$ 
  by  \cite{vas} (see also \cite[p. 156-157]{moru}). Note that  the covering dimension of
   $\T^N$ is $N$. 
 \end{proof}

We would like to present  the following problem:
\begin{question}
Give  a characterization of those sub-semigroups $\Lambda$ of $(\R,+)$ for which the 
Bass stable rank of ${\rm AP}_{\Lambda}$ is finite whenever  $\dim [\Lambda]<\infty$.
What about the case where $\Lambda=\Q$?
\end{question}

 \section{The analytic trace $\ap$ of  the algebra {\rm AP}}
 
 Let $\R^+:=\{x\in \R: x\geq 0\}$ and  $\C^+:=\{z\in \C: {\rm Im} z>0\}$  the upper half-plane.
   \begin{definition}
The analytic trace $\ap$ of AP is defined as the uniform   closure in $C_b(\R,\C)$ of the set 
of all functions of the form
$$Q(t)=\sum_{j=1}^n a_j e^{i\lambda_jt},$$
where $a_j\in\C, \lambda_j\in \R^+$, and $n\in \N^*$.  \footnote{ Note that this definition coincides
with that given in the introduction in view of Theorem \ref{mainap}(2).}

Moreover, 
let $\ap_{{\rm hol}}$ denote the uniform closure in $C_b(\C^+,\C)$ of the set of all
 functions of the form 
 $$q(z)=\sum_{j=1}^n a_j e^{i\lambda_j z},$$  where
 $a_j\in\C, \lambda_j\in \R^+$, and $n\in \N^*$.
 \end{definition}

Note that the only difference in the definitions of the classes {\rm AP} and $\ap$ is that we allow here
only non-negative  exponents $\lambda_j$.
The following relations between $\ap$ and $\ap_{{\rm hol}}$ are easy to check.

\begin{theorem}\label{ap+}\hfill  
 \begin{enumerate}
\item [(1)] $ \ap_{{\rm hol}}$ is a closed subalgebra of $H^\infty(\C^+)$. 

\item [(2)] Every function $f\in  \ap_{{\rm hol}}$  has a continuous extension, $f^*$, to the boundary $\R$ of $\C^+$.
\item [(3)] $f^*\in \ap$ and $||f||_{\C^+}:=\sup_{z\in\C^+}|f(z)|= ||f^*||_\infty$.
\item [(4)]  $\ap$ is isomorphic isometric to $\ap_{{\rm hol}}$.
\item [(5)] If $g\in \ap$, then its Poisson-integral
$$[g](z):=\int_\R  P_y(x-t) g(t) \;dt,\;\; z=x+iy\in \C^+$$
belongs to $\ap_{{\rm hol}}$ and $[g]^*=g$.
\item[(6)] The Poisson operator ${\rm AP}\to C(\C^+,\C), f\mapsto [f]$ is multiplicative on $\ap$.
\end{enumerate}
\end{theorem}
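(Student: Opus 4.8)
The plan is to organize everything around the single elementary observation that, for $\lambda\ge 0$ and $z=x+iy\in\C^+$, one has $|e^{i\lambda z}|=e^{-\lambda y}\le 1$. Thus each generator $q(z)=\sum_{j=1}^n a_je^{i\lambda_j z}$ is entire, and its restriction to $\C^+$ is bounded by $\sum_j|a_j|$ and holomorphic, so it lies in $H^\infty(\C^+)$. Since $e^{i\lambda z}e^{i\mu z}=e^{i(\lambda+\mu)z}$ again has non-negative frequency, the generators form an algebra, and taking the uniform closure (a uniform limit of bounded holomorphic functions being bounded holomorphic) yields (1). For (2) I would use that each generator, being entire, is continuous on $\overline{\C^+}$, and that $\C^+$ is dense there, so $\sup_{\overline{\C^+}}|q|=\sup_{\C^+}|q|$; hence a sequence of generators that is uniformly Cauchy on $\C^+$ is uniformly Cauchy on $\overline{\C^+}$ and its limit provides both $f$ on $\C^+$ and the continuous extension $f^*$ on $\R$. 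The same density argument gives the easy inequality $\|f^*\|_\infty\le\|f\|_{\C^+}$, and since the boundary restriction of a generator is precisely a generalized trigonometric polynomial with non-negative frequencies (a generator of $\ap$), passing to the limit shows $f^*\in\ap$, which is the first half of (3).

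The computational heart is (5). First I would evaluate the Poisson integral of a single exponential: for $\lambda\ge 0$ the function $e^{i\lambda z}=e^{i\lambda x}e^{-\lambda y}$ is bounded and harmonic on $\C^+$ with boundary value $e^{i\lambda t}$, so by uniqueness of the bounded harmonic extension it is its own Poisson integral, i.e. $[e^{i\lambda\,\cdot}]=e^{i\lambda z}$. This is exactly where non-negativity of the frequencies is essential, since for $\lambda<0$ the holomorphic extension would be unbounded. By linearity $[Q_n]=q_n$ for every generator. Because $P_y\ge 0$ and $\int_\R P_y=1$, the Poisson operator is a contraction in the supremum norm; therefore, writing an arbitrary $g\in\ap$ as a uniform limit of generators $Q_n$, we get $q_n=[Q_n]\to[g]$ uniformly on $\C^+$, so $[g]\in\ap_{{\rm hol}}$ by closedness, and the easy inequality above gives $[g]^*=\lim Q_n=g$.

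With (5) available, the remaining norm identity in (3) comes for free and without any maximum principle: for $f\in\ap_{{\rm hol}}$, writing $f=\lim q_n$ and using $[Q_n]=q_n$ together with contractivity yields $f=[f^*]$, whence $\|f\|_{\C^+}=\|[f^*]\|_{\C^+}\le\|f^*\|_\infty$, and combined with the reverse inequality this gives $\|f\|_{\C^+}=\|f^*\|_\infty$. Then (4) is formal: the boundary map $f\mapsto f^*$ is linear, multiplicative (restriction respects products), isometric by (3), surjective because every $g\in\ap$ lifts via $[g]$ from (5), and injective by the isometry, hence an isometric algebra isomorphism. Finally (6) follows by comparing boundary values: for $f,g\in\ap$ both $[f][g]$ and $[fg]$ lie in $\ap_{{\rm hol}}$ by (1) and (5), and $([f][g])^*=f^*g^*=fg=[fg]^*$, so injectivity of the boundary map forces $[fg]=[f][g]$.

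The only genuine subtlety is the ordering around the norm equality in (3). The naive route proves it from the maximum modulus principle for the half-plane (Phragm\'en--Lindel\"of), but I would instead derive it from the identity $f=[f^*]$ and the contractivity of the Poisson operator, which keeps the whole argument elementary and sidesteps any apparent circularity between injectivity of $f\mapsto f^*$ and the isometry. Everything else is routine bookkeeping with uniform limits and the closedness of $\ap_{{\rm hol}}$.
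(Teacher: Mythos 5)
The paper states this theorem without proof (``the following relations \dots are easy to check''), so there is no argument of the authors' to measure yours against; judged on its own, your proof is correct and complete. Your organization is the natural one: everything reduces to the identity $[e^{i\lambda\,\cdot}](z)=e^{i\lambda z}$ for $\lambda\ge 0$ --- valid either by the uniqueness of bounded harmonic extensions to $\C^+$, as you invoke, or by the classical computation $\int_\R P_y(x-t)e^{i\lambda t}\,dt=e^{i\lambda x-|\lambda|y}$ --- combined with the contractivity of the Poisson operator and the density of $\C^+$ in $\overline{\C^+}$. Your choice to derive $\|f\|_{\C^+}=\|f^*\|_\infty$ from $f=[f^*]$ and contractivity, rather than from a Phragm\'en--Lindel\"of maximum principle for the half-plane, is a genuine simplification, and it correctly breaks the apparent circularity between the isometry of $f\mapsto f^*$ and its injectivity (which you then need for (6)). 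The only step a careful reader might ask you to expand is the uniqueness assertion for bounded harmonic functions on $\C^+$ with prescribed continuous boundary data: it is standard (the difference $u-P[\varphi]$ is bounded harmonic with zero boundary values, extends harmonically across $\R$ by reflection, and is then constant by Liouville), but it is not free of the boundedness hypothesis, since $u(z)={\rm Im}\,z$ is a nonzero harmonic function vanishing on $\R$; one line of justification or a reference would make the argument fully self-contained.
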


The following result, that appears with an entirely different proof in \cite{bks}
does not seem to be widely known.
 
 \begin{theorem}\label{ap++}
 If $f\in H^\infty(\C^+)$ has a continuous extension $f^*$ to $\R$ such that $f^*\in {\rm AP}$,
 then $f^*\in \ap$ and $f\in \ap_{{\rm hol}}$.
\end{theorem}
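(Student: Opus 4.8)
The plan is to show that a bounded holomorphic function $f$ on $\C^+$ whose boundary values $f^*$ lie in $\mathrm{AP}$ must in fact have \emph{non-negative} Bohr spectrum, which by the definition of $\ap$ (together with Theorem \ref{mainap}(2)) places $f^*$ in $\ap$; the companion statement $f\in\ap_{\mathrm{hol}}$ then follows from Theorem \ref{ap+}(4)--(5), since the Poisson extension $[f^*]$ lies in $\ap_{\mathrm{hol}}$, agrees with $f^*$ on the boundary, and a bounded harmonic (indeed holomorphic) function on $\C^+$ is determined by its boundary values. So the whole matter reduces to the spectral claim: if $f\in H^\infty(\C^+)$ extends continuously to $f^*\in\mathrm{AP}$, then $\widehat{f^*}(\lambda)=0$ for every $\lambda<0$.

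First I would compute the Fourier--Bohr coefficient $\widehat{f^*}(\lambda)$ for a fixed $\lambda<0$ directly from the mean-value definition in Definition \ref{fbco}, namely as a limit of $\frac{1}{|I|}\int_I f^*(t)e^{-i\lambda t}\,dt$ over large intervals. The key idea is to exploit holomorphicity by shifting the contour of integration up into the half-plane. For a fixed height $y>0$, the function $t\mapsto f(t+iy)e^{-i\lambda(t+iy)}$ is well-behaved, and since $f$ is bounded and holomorphic one expects, via Cauchy's theorem applied to tall thin rectangles with vertices on $\R$ and on the line $\mathrm{Im}=y$, that the horizontal averages at height $0$ and at height $y$ agree in the limit (the two vertical sides contribute a boundedly many terms that vanish after dividing by $|I|\to\infty$). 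This would give
$$
\widehat{f^*}(\lambda)=\lim_{|I|\to\infty}\frac{1}{|I|}\int_I f(t+iy)e^{-i\lambda(t+iy)}\,dt
= e^{\lambda y}\,\lim_{|I|\to\infty}\frac{1}{|I|}\int_I f(t+iy)e^{-i\lambda t}\,dt.
$$
The inner average is bounded in modulus by $\|f\|_{\C^+}$, so the whole expression is $O\!\left(e^{\lambda y}\right)$. Since $\lambda<0$, letting $y\to+\infty$ forces $e^{\lambda y}\to 0$, and hence $\widehat{f^*}(\lambda)=0$.

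The main obstacle will be making the contour-shift rigorous, i.e.\ justifying that the Fourier--Bohr mean is unchanged when the integration is lifted from $\R$ to the horizontal line at height $y$. The delicate point is that $f$ need not extend continuously beyond $\R$ on the closed half-plane and that we are taking means over unbounded families of intervals, so I would first verify the almost-periodicity of the shifted traces $t\mapsto f(t+iy)$ (plausibly by approximating $f$ uniformly on $\overline{\C^+}$ by the defining exponential sums once we know the spectrum is controlled, or by a normal-families argument on $f(\cdot+iy)$ for $y>0$) so that the relevant mean values exist, and then control the vertical edge contributions in the Cauchy-theorem estimate uniformly in the length $|I|$. Once the identity $\widehat{f^*}(\lambda)=e^{\lambda y}\,\widehat{f(\cdot+iy)}(\lambda)$ is established with the right uniform bound, the conclusion for negative $\lambda$ is immediate, and by the uniqueness part Theorem \ref{mainap}(1) we then recover $f^*\in\ap$ and, via Theorem \ref{ap+}, $f\in\ap_{\mathrm{hol}}$.
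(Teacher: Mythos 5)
Your plan is correct and rests on exactly the same key fact as the paper's proof: for fixed $\lambda<0$ the function $f(z)e^{-i\lambda z}$ is bounded and holomorphic on $\C^+$ (with $|f(z)e^{-i\lambda z}|\le M e^{\lambda\,{\rm Im}\,z}\le M$), and Cauchy's theorem then kills the negative Fourier--Bohr coefficients; the passage from $\sigma(f^*)\ss\R^+$ to $f^*\in\ap$ via Theorem \ref{mainap}(2) is identical. The only genuine difference is the contour and the order of limits: the paper integrates over the half-disk of radius $T$ and sends $T\to\infty$ in a single limit, estimating the semicircular arc with $\sin\theta\ge\frac{2}{\pi}\theta$, whereas you use a rectangle of fixed height $y$ and base $I$, let $|I|\to\infty$ first, and then let $y\to+\infty$. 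Your route is, if anything, slightly more elementary, and the obstacles you flag dissolve: you do \emph{not} need the shifted trace $t\mapsto f(t+iy)$ to be almost periodic or its mean to exist a priori, since each vertical edge of the rectangle contributes at most $My$, hence $O(y/|I|)\to 0$ after dividing by $|I|$, and the horizontal average at height $y$ need only be \emph{bounded} by $M$; this immediately gives $|\widehat{f^*}(\lambda)|\le M e^{\lambda y}$ for every $y>0$, which vanishes as $y\to+\infty$ because $\lambda<0$. Likewise, continuity of $f$ up to $\R$ is precisely the hypothesis, so Cauchy's theorem on a closed rectangle resting on $\R$ is legitimate (shift the rectangle up by $\e$ and use uniform continuity). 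Your final step, identifying $f$ with the Poisson extension $[f^*]$ by uniqueness of bounded holomorphic extensions and invoking Theorem \ref{ap+}(5), is sound and in fact spells out a point the paper's proof leaves implicit, as it only establishes $f^*\in\ap$ explicitly.
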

 \begin{proof}
{\bf Step 1} We first show that for every $\lambda<0$ the Fourier-Bohr coefficients   $$\widehat{f^*}(\lambda)=\lim_{T\to\infty} \frac{1}{2T} \int_{-T}^T f^*(t) e^{-i\lambda t}\; dt
$$
of $f^*$ are zero.\\

Because  $\lambda<0$, the function $F(z):=f(z) e^{-i\lambda z}$ is  bounded  on $\C^+$.
Hence  $F$ belongs to $\H(\C^+)$ and has a continuous extension to $\R$.
If $M$ is an upper bound for $|f|$,  then
$$|F|\leq M e^{\lambda {\rm Im}\, z}\leq M.$$
 For $T>0$, let $\Gamma_T$ be the boundary  of the half disk
  $$\{w\in \C: |w|\leq T, \;{\rm Im}\, w\geq 0\}.$$
   By Cauchy's integral theorem, 
$$\int_{\Gamma_T} F(\xi) \; d\xi =0.$$ 
A  splitting  of the curve $\Gamma_T$  into the upper half circle  $C_T$ and the 
  segment $[-T,T]$
   yields
   \begin{equation}\label{split}
0=\int_{C_T} F(\xi)\; d\xi + \int_{-T}^T F(t) dt.
\end{equation}
   We claim that
   $$ I_T:= \frac{1}{2T}\int_{C_T} F(\xi)\; d\xi\to 0 \;\;\text{as  $T\to\infty$}.$$
   In fact, due to the symmetry of the sine-function, and  the facts that  for $0\leq \theta\leq \pi/2$ and $\xi=Te^{i\theta}$
   $$|F(\xi)|\leq  M e^{ T\lambda \sin \theta}\leq M e^{-T |\lambda | \frac{2}{\pi}\theta},$$
   we obtain
   \begin{eqnarray*}
|I_T| & \leq &  \frac{2M}{2T}\int_0^{\pi/2} e^{-T |\lambda | \frac{2}{\pi}\theta}\; T d\theta\\
&=&\frac{M}{T }\; \frac{1- e^{- T|\lambda|}}{|\lambda|\, \frac{2}{\pi}}\to 0\; \text{as $T\to\infty$}.
\end{eqnarray*}
Hence, by \zit{split},
$$\lim_{ T\to \infty}\frac{1}{2T} \int_{-T}^T F(t) dt=0.$$\medskip

{\bf Step 2} By Step 1, the Bohr spectrum 
$$\sigma(f^*)=\{\lambda\in \R: \widehat{f^*}(\lambda)\not=0\}$$
of $f^*\in {\rm AP}$ belongs to $\R^+$. Hence, by 
Theorem \ref{mainap}, we obtain a sequence  of trigonometric polynomials
$$q_n(t)=\sum_{j=1}^{N(n)} a_{j,n} e^{i\lambda _{j,n} t}$$
with $\lambda_{j,n}\geq 0$ such that $||q_n-f^*||\to 0$.
Thus $f^*\in \ap$.
\end{proof}

\begin{corollary}
$\ap$ is the set of functions in AP that admit a {\rm bounded} holomorphic extension to $\C^+$.
\end{corollary}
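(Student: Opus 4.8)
The plan is to prove the two inclusions separately, since the statement asserts that $\ap$ coincides exactly with the set of AP-functions admitting a bounded holomorphic extension to $\C^+$. Let me call this latter set $\mathcal{E}$.

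For the inclusion $\ap \subseteq \mathcal{E}$, I would argue directly from the definitions already established. Given $f \in \ap$, Theorem \ref{ap+}(4) tells us that $\ap$ is isomorphic isometric to $\ap_{{\rm hol}}$, and more precisely Theorem \ref{ap+}(5) provides the Poisson integral $[f] \in \ap_{{\rm hol}}$ with $[f]^* = f$. Since $\ap_{{\rm hol}} \subseteq H^\infty(\C^+)$ by Theorem \ref{ap+}(1), the function $[f]$ is a bounded holomorphic extension of $f$ to $\C^+$, and by Theorem \ref{ap+}(3) its supremum norm matches $\|f\|_\infty$. Thus $f \in \mathcal{E}$, and this direction is essentially a bookkeeping exercise assembling the pieces of Theorem \ref{ap+}.

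For the reverse inclusion $\mathcal{E} \subseteq \ap$, I would invoke Theorem \ref{ap++} directly. Suppose $g \in \mathcal{E}$: then $g \in {\rm AP}$ and there exists $f \in H^\infty(\C^+)$ with continuous extension $f^* = g$ to $\R$. Since $f^* = g \in {\rm AP}$, the hypotheses of Theorem \ref{ap++} are met, and we conclude $g = f^* \in \ap$. This direction is therefore an immediate corollary of the preceding theorem, which is precisely what makes it a corollary rather than a standalone result.

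The two inclusions together give $\ap = \mathcal{E}$, completing the proof. I expect no real obstacle here: the genuine analytic content lives in Theorem \ref{ap++} (the contour-integration argument in Step 1 showing that negative-frequency Fourier--Bohr coefficients vanish for a function holomorphic and bounded on $\C^+$) and in Theorem \ref{ap+}(5) (the Poisson-integral construction). The corollary itself merely repackages these as a clean characterization, so the only care required is to state both inclusions explicitly and to cite the correct parts of the two theorems rather than to perform any new estimate.
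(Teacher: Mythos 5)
Your proposal is correct and follows essentially the same route as the paper: the forward inclusion by assembling the relevant parts of Theorem \ref{ap+} (in particular the Poisson-integral extension), and the reverse inclusion as a direct application of Theorem \ref{ap++}. No gaps.
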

\begin{proof}
By Theorem \ref{ap+}, every $\ap$-function admits a bounded holomorphic extension to $\C^+$.
If, on the other hand, $F$ is a bounded holomorphic extension to $\C^+$  of $f\in {\rm AP}$,
then Theorem \ref{ap++} implies that $f=F^*\in \ap$.
\end{proof}

As a corollary to the main theorem \ref{bsraplambda} we have
\begin{theorem}
$\bsr \ap=\tsr \ap=\bsr \ap_{{\rm hol}}=\tsr \ap_{{\rm hol}}=\infty$.
\end{theorem}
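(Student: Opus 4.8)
The plan is to recognize $\ap$ as an instance of the algebras ${\rm AP}_\Lambda$ to which Theorem \ref{bsraplambda} already applies, and then to transport the conclusion to $\ap_{\rm hol}$ through the isometric isomorphism established in Theorem \ref{ap+}. First I would observe that $\ap = {\rm AP}_{\R^+}$, where $\R^+ = [0,\infty)$. Indeed, by definition $f \in \ap$ means $\widehat f(\lambda) = 0$ for all $\lambda < 0$, i.e. $\sigma(f) \ss \R^+$, which is precisely the condition $f \in {\rm AP}_{\R^+}$. The set $\R^+$ is a sub-semigroup of $(\R,+)$: it contains $0$ and is closed under addition. Moreover, the $\Q$-vector space $[\R^+]$ generated by $\R^+$ is all of $\R$ (every real is a rational multiple of a positive real), and $\R$ has infinite dimension over $\Q$. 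Hence the hypothesis of Theorem \ref{bsraplambda} is met, and I would conclude at once that $\bsr \ap = \tsr \ap = \infty$.

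Next I would transfer this to $\ap_{\rm hol}$. By Theorem \ref{ap+}(4), $\ap$ is isomorphic isometric to $\ap_{\rm hol}$. Since the Bass and topological stable ranks are invariant under isomorphic isometries (as already invoked in the proof of the previous theorem), it follows that $\bsr \ap_{\rm hol} = \tsr \ap_{\rm hol} = \infty$ as well, which completes the chain of equalities.

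The honest remark here is that there is essentially no hard step: the entire weight of the argument has been absorbed into Theorem \ref{bsraplambda}, and the only points requiring (trivial) verification are that $\R^+$ genuinely satisfies the semigroup axioms and that its rational span is infinite-dimensional. If one wanted a self-contained argument bypassing Theorem \ref{bsraplambda}, the real work would lie in reproducing the non-reducibility of the fundamental tuples of Lemma \ref{fundamentalex} inside $\ap$ itself; but because $\R^+$ already contains infinitely many $\Q$-independent positive reals (for instance a transcendence basis such as $\pi, \pi^2, \dots$), the existing machinery applies verbatim and no genuinely new obstacle arises. The conceptual payoff worth emphasizing is that, unlike Su\'arez's method, this route never uses closure under complex conjugation, which is exactly why it succeeds for $\ap$ where $f, \ov f \in \ap$ forces $f$ constant.
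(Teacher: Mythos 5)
Your proof is correct and follows exactly the route the paper intends: the paper states this result as an immediate corollary of Theorem \ref{bsraplambda} (applied to the sub-semigroup $\R^+$, whose rational span $\R$ is infinite-dimensional over $\Q$) combined with the isometric isomorphism $\ap \cong \ap_{\rm hol}$ from Theorem \ref{ap+}(4). Nothing further is needed.
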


\section{General subalgebras of {\rm AP}}

The most general result we obtain is the following:

\begin{theorem}\label{general}
Let $\Lambda=\{\lambda_1,\lambda_2,\cdots\}$  be a countably infinite subset of reals
that is linearly independent over $\Q$.  Associate with $\Lambda$  the following functions:
$$F_j(t):=e^{i\lambda_{2j-1}t}+ e^{i\lambda_{2j}t}-1,\sp j=1,2,\dots.$$

Then $F_j\in {\rm AP}$ and the Bass stable rank  of any (complex) subalgebra $A$ of ${\rm AP}$
 containing the functions $1$ and $F_j$~ $(j\in \N^*)$, is  infinite.  Moreover, the tuple  
 $(F_1,\dots, F_N)$ cannot be approximated  in the supremum norm by tuples invertible
 in $A$.
 \footnote{The usual results  on the topological stable rank do not apply, since these
 algebras $A$ are not necessarily complete or $Q$-algebras (i.e. $A^{-1}$ open). }
\end{theorem}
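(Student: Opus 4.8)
The plan is to establish every assertion for one fixed $N\in\N^*$ and then let $N$ grow. That $F_j\in{\rm AP}$ is immediate, since each $F_j$ is a generalized trigonometric polynomial. The crucial first step is to produce, \emph{inside} the possibly very small algebra $A$, an invertible $(N+1)$-tuple built only from the generators $F_j$. Here one cannot imitate the proof of Theorem \ref{bsraplambda} directly: there, invertibility of $(F_1,\dots,F_N,G)$ in ${\rm AP}_\Lambda$ was inherited from $A(\D^{4N})$ through the homomorphism $\Phi_{\Lambda_0}$, but the B\'ezout partner produced that way is a tuple of general $A(\D^{4N})$-functions whose evaluations need not lie in $A$. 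I would instead set $g:=1/4-\sum_{j=1}^N F_jF_{N+j}$, which lies in $A$ because $A$ is a complex algebra containing $1$ and all the $F_j$ (note $F_{N+1},\dots,F_{2N}$ involve only $\lambda_1,\dots,\lambda_{4N}\in\Lambda$). The identity $4g+\sum_{j=1}^N(4F_{N+j})F_j=1$ then exhibits $(F_1,\dots,F_N,g)$ as an element of $U_{N+1}(A)$, with B\'ezout coefficients $B_j=4F_{N+j}$ and $C=4$ all lying in $A$. I expect this to be the real conceptual hurdle: making invertibility internal to $A$ rather than borrowed from $A(\D^{4N})$.

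The non-reducibility of this tuple, by contrast, is external to $A$. Since $A\ss{\rm AP}$, every B\'ezout relation with coefficients in $A$ is one in ${\rm AP}$, so $U_N(A)\ss U_N({\rm AP})$ and reducibility of $(F_1,\dots,F_N,g)$ in $A$ would force reducibility in ${\rm AP}$. Thus it suffices to show $(F_1,\dots,F_N,g)$ is not reducible in ${\rm AP}$, and for this I would run verbatim the argument in the proof of Theorem \ref{bsraplambda}: assuming $\sum_{j=1}^N|F_j+H_jg|\ge\delta>0$ for some $H_j\in{\rm AP}$, approximate the $H_j$ by trigonometric polynomials (keeping the bound $\ge\delta/2$, as in \zit{kronni}), enlarge $\{\lambda_1,\dots,\lambda_{4N}\}$ by Lemma \ref{trafo} to a finite $\Q$-linearly independent set spanning all exponents in play, and push the inequality down to a torus $\T^L$ by Kronecker's Theorem \ref{kron2}. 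This yields reducibility in $C(\T^L,\C)$ of the tuple $(z_{2j-1}^s+z_{2j}^s-1,\ g)$, contradicting Lemma \ref{fundamentalex}; the only input needed is the $\Q$-independence of the $\lambda_j$, which holds by hypothesis (positivity played no role in this part). Hence $(F_1,\dots,F_N,g)\in U_{N+1}(A)$ is non-reducible, so $\bsr A\ge N+1$, and letting $N\to\infty$ gives $\bsr A=\infty$.

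For the final assertion I would argue by contradiction, reusing the Brouwer obstruction but in an approximate form. Suppose $(\phi_1,\dots,\phi_N)\in U_N(A)$ with $\|\phi_j-F_j\|_\infty$ small; by Theorem \ref{unap} applied in ${\rm AP}$ we get $\sum_j|\phi_j|\ge\delta>0$ on $\R$. Approximate each $\phi_j$ by a trigonometric polynomial, retaining both the bound $\ge\delta/2$ and the closeness to $F_j$, and transport everything to a torus $\T^L$ exactly as above, so that $F_j$ becomes $\tilde F_j=z_{2j-1}^s+z_{2j}^s-1$ and $\phi_j$ becomes $p_j\in C(\T^L,\C)$ with $\sum_j|p_j|\ge\delta/2$ and $\|p_j-\tilde F_j\|_\infty$ small. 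Fixing the coordinates beyond the first $2N$ at arbitrary constants, I would then insert the section $g_s=(G_1,G_2)$ of Lemma \ref{decotorus2}: setting $z_{2j-1}=G_1(w_j+1)$, $z_{2j}=G_2(w_j+1)$ for $w\in(1/2)\mathbf B_N$ makes $\tilde F_j=w_j$, so the resulting continuous map $\Phi:(1/2)\mathbf B_N\to\C^N$ satisfies $\|\Phi(w)-w\|_\infty<\varepsilon$. The main obstacle here is that, unlike in Lemma \ref{fundamentalex}, the perturbation no longer vanishes exactly on the boundary sphere; instead I would invoke the homotopy/degree form of the no-retract theorem: if $\varepsilon$ is small compared to the radius $1/2$, the straight-line homotopy from the inclusion to $\Phi$ stays zero-free on $\partial((1/2)\mathbf B_N)$, so $\Phi$ has a zero $w^*$ in the interior. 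At the corresponding point of $\T^L$ all $p_j$ vanish, contradicting $\sum_j|p_j|\ge\delta/2>0$. This shows the distance from $(F_1,\dots,F_N)$ to $U_N(A)$ is bounded below by a positive constant depending only on $N$, which is precisely the non-approximability claim.
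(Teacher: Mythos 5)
Your treatment of the first two assertions ($F_j\in{\rm AP}$, $\bsr A=\infty$) coincides with the paper's: the same $g=\frac14-\sum_{j=1}^N F_jF_{N+j}$, the same explicit B\'ezout identity $4g+\sum_j(4F_{N+j})F_j=1$ keeping invertibility internal to $A$, and non-reducibility inherited from ${\rm AP}$ via the proof of Theorem \ref{bsraplambda} together with Lemma \ref{fundamentalex}; your observation that positivity of the $\lambda_j$ plays no role in that transfer is correct. For the non-approximability claim, however, you take a genuinely different route. The paper argues algebraically: if $(H_1,\dots,H_N)\in U_N(A)$ with $\|F_j-H_j\|_\infty<1/(24N)$, then $F:=\sum_j H_jF_{j+N}+G$ satisfies $|4F-1|=4\bigl|\sum_j(H_j-F_j)F_{j+N}\bigr|\le\tfrac12$, hence $F\in U_1({\rm AP})$; inserting a B\'ezout relation $1=\sum_j H_jG_j$ gives $F=\sum_j H_j(F_{j+N}+G_jG)$, so $(F_{N+1},\dots,F_{2N},G)$ is reducible in ${\rm AP}$, contradicting part (i) by the symmetry of $G$. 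You instead re-run the Kronecker/Lemma \ref{trafo} transfer for the approximating tuple itself and replace the exact no-retract argument of Lemma \ref{fundamentalex} by its stable, degree-theoretic form: a continuous map of the ball (or of the polydisk $(1/2)\overline{\D}^N$, which avoids the $\sqrt N$ loss) that is uniformly within $\e<1/2$ of the identity is homotopic to it through boundary-zero-free maps and so still vanishes somewhere inside. Both arguments are correct. The paper's reduction is shorter and needs no further topology once (i) is in place; your version is self-contained at the level of the Brouwer obstruction and makes the quantitative threshold for the distance from $(F_1,\dots,F_N)$ to the invertible tuples explicit, at the cost of repeating the approximation-and-transfer machinery a second time.
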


\begin{proof}
(i) By Definition of ${\rm AP}$, it is obvious that   $F_j\in AP$.  Note also that 
 $1$ is the identity element in $A$. 
Let $$G= \frac{1}{4}-\sum_{j=1}^N F_jF_{N+j}.$$
Then $G\in A$. Since $\sum_{j=1}^N F_{N+j} F_j +G = 1/4\in A^{-1}$, we see that $(F_1,\dots,F_N,G)$
is an invertible ${(N+1)}$-tuple in $A$.  By  (the proof of) Theorem \ref{bsraplambda},
$(F_1,\dots,F_N,G)$ is not reducible in ${\rm AP}$.  Consequently,
since $A\ss AP$, $(F_1,\dots,F_N,G)$ is not reducible in $A$ either.
Thus $\bsr A\geq N+1$. Since $N$ was arbitrarily chosen, $\bsr A=\infty$.\\

(ii) To prove the second assertion,  suppose that  $(F_1,\dots,F_N)$ does admit 
approximations by invertible $N$-tuples in $A$; say $||F_j-H_j||_\infty< 1/(24N)$ for some
$(H_1,\dots, H_N)\in U_N(A)$.  Since $U_n(A)\ss U_n({\rm AP})$ for every $n$,
 we conclude from $||F_j||_\infty\leq 3$ and 
$$\sum_{j=1}^N F_j F_{j+N}+ G=\frac{1}{4}$$
that
$$F:=\sum_{j=1}^N H_j F_{j+N} +G\in U_1({\rm AP}),$$
because 
$$|4F-1|= 4\Bigl|\sum_{j=1}^N (H_j-F_j) F_{j+N}\Bigr|\leq 4 N \frac{1}{24 N} 3= \frac{1}{2}.$$
Let $G_j\in {\rm AP}$ be chosen so that $1=\sum_{j=1}^N  H_jG_j$. Then 
$F$ may be rewritten as  
\begin{eqnarray*} 
F &= &\sum_{j=1}^N H_j F_{j+N} + \sum_{j=1}^N  H_jG_j G\\
&=& \sum_{j=1}^N H_j(F_{j+N}+ G_jG).
\end{eqnarray*}
Thus $(F_{N+1},\dots, F_{N+N}, G)$ is reducible  in ${\rm AP}$; a contradiction to (i)
(note that $(F_1,\dots, F_N)$ plays the same role as $(F_{N+1}, \dots, F_{N+N})$
 due to the similarity in the definitions of these functions).
\end{proof}

To sum up, under the assumptions of Theorem \ref{general}, we were
 able to determine the Bass stable ranks of all  standard
 subalgebras of ${\rm AP}$ and $\ap$  without using corona-type theorems
characterizing the invertible tuples in advance.

As  a final  important  example to which our theory applies,  we mention the Wiener-type algebra
$$APW^+:=\Bigl\{F(t):=\sum_{j=1}^\infty a_j e^{i\lambda_j t}: ||F||:=\sum_{j=1}^\infty|a_j|<\infty,\;
\lambda_j\geq 0\Bigr\};$$
see \cite{misa} for many other examples.

\bibliographystyle{amsplain}

\end{document}